\newtheorem{th1}{Theorem}[section]
\newtheorem{lem}[th1]{ Lemma}
\newtheorem{rem}[th1]{ Remark}
\author[{H.Hani and M.Khenissi}]{Houda Hani and Moez Khenissi}
\title[\textbf{On a finite difference scheme for blow up solutions}]{On a finite difference scheme for blow up solutions for the Chipot-Weissler equation}
\begin{document}
	\frontmatter
\begin{abstract}
In this paper, we are interested in the numerical analysis of blow up for the Chipot-Weissler equation $u_{t}=\Delta u+|u|^{p-1}u-|\nabla u|^{q}$ with Dirichlet boundary conditions in bounded domain when $p>1$ and $1\leq q\leq \frac{2p}{p+1}$.\\
To approximate the blow up solution, we construct a finite difference scheme and we prove that the numerical solution satisfies the same properties of the exact one and blows up in finite time.
\end{abstract}

\keywords{
Nonlinear parabolic equation, Chipot-Weissler equation, finite time blow up, finite difference scheme, numerical solution, nonlinear gradient term.
}
\frontmatter
	\maketitle
	\mainmatter\date{}
	\tableofcontents
\section{Introduction}
In this paper, we study the numerical approximation of solutions which achieve blow up in finite time of the Chipot-Weissler equation
\begin{equation}
u_{t}=\Delta u+|u|^{p-1}u-|\nabla u|^{q} \ \text{ in } \ \ \mathbb{R}^{+} \times\Omega,
\label{eq11}
\end{equation}
with Dirichlet boundary conditions
\begin{equation}
u(t,x)=0,  \  t>0 \  \ \text{ and } \ x\in \partial\Omega,
\label{eq12}
\end{equation}
and initial data
\begin{equation}
u(0,x)=u_{0}(x), \ \ x\in \Omega,
\label{eq13}
\end{equation}
where $\Omega$ is a regular bounded domain in $ \mathbb{R}^{d}\ \text{ and } \ p,\ q$ are fixed finite parameters.\\

This problem represents a model in population dynamics which is proposed by Souplet in \cite{souplet}, where \eqref{eq11}-\eqref{eq13} describe the evolution of the population density of a biological species, under the effect of certain natural mechanisms, $u(t,x)$ denotes the spatial density of individuals located near a point $x\in \Omega$ at a time $t\geq 0.$ The evolution of this density depends on three types of mechanisms: displacements, births and deaths. The reaction term represents the rate of births. If we suppose that the individuals can be destroyed by some predators during their displacements, then the dissipative gradient term represents the density of predators.\\

In this paper, we are concerned with solution $u$ of \eqref{eq11}-\eqref{eq13} which blows up in the $L^{\infty}$ norm in the following sense : there exists $T^{*}<\infty$, called the blow up time such that the solution $u$ exists in $[0,T^{*})$ and $$\lim_{t\rightarrow T^{*}}\left\|u(t)\right\|_{L^{\infty}}=+\infty.$$
Numerous articles have been published concerning the problem of global existence or nonexistence of solutions to nonlinear parabolic equations.
Problem \eqref{eq11}-\eqref{eq13}, has been widely analyzed from a mathematical point of view, on the profile, blow up rates, asymptotic behaviours and self similar solutions (see for example: \cite{quittner}, \cite{snoussi}, \cite{pht}, \cite{phweissler}, \cite{souplettayachi} and \cite{zaag}), but to our knowledge, there are no results concerning its numerical approximation.\\

Let us first have a look at the theoretical analysis of this problem. The quasilinear parabolic equation \eqref{eq11} was introduced in $1989$ by Chipot and Weissler (\cite{chipot}) in order to investigate the effect of a damping term on global existence or nonexistence of solutions. They proved local existence, uniqueness and regularity for the problem and they showed the following theorem
\begin{th1} \cite{chipot}
\label{regularite}
Suppose $s\geq q,\ s>d(q-1),\ s\geq \dfrac{dp}{d+p},\ s>\dfrac{d(p-1)}{p+1},\ s\geq 2q$ and $s>dq.$\\
Let $u_{0}\in D:=\{\phi\in W^{3,s}(\Omega)\cap W_{0}^{1,s}(\Omega)\ \  \text{ such that}\ \ \Delta \phi+\left|\phi\right|^{p-1}\phi-\left|\nabla \phi\right|^{q}\in W_{0}^{1,s}(\Omega)\}$ and let $u(t)$ be the maximal solution of the integral equation associated to \eqref{eq11}-\eqref{eq13}
\begin{equation}
u(t)=e^{t\Delta}u_{0}+\int\limits_{0}^{t}e^{(t-s)\Delta}\left(\left|u\right|^{p-1}u-\left|\nabla u\right|^{q}\right)(s)ds
\label{integrale}
\end{equation}
where $e^{t\Delta}$ denotes the heat semigroup on $\Omega$ with Dirichlet conditions. Then we have
\begin{enumerate}
	\item $u\in C^{1}\left([0,T^{*});W_{0}^{1,s}(\Omega)\right)$ and
    $u'(t)=\Delta u+\left|u\right|^{p-1}u-\left|\nabla u\right|^{q}$ where each term on the right side is in $C\left([0,T^{*});L^{\frac{s}{q}}(\Omega)\right)$.
    \item $u\in C\left((0,T^{*});W^{2,\frac{s}{q}}(\Omega)\right)$.
    \item $\left\|u(t)\right\|_{\infty}$ and $\left\|\nabla u(t)\right\|_{\infty}$ are bounded on any interval $[0,T]$ with $T<T^{*}.$
\end{enumerate}
\label{regularitechipot}
\end{th1}
\begin{rem}
	$D$ is the domain of the generator $B$ of the semi-flow $W_{t}$ on $W_{0}^{1,s}(\Omega)$ resulting from the integral equation \eqref{integrale} (for more details see \cite{chipot}).
\end{rem}
They proved that under appropriate conditions on $q,\ p$ and $d$, there exists a suitable initial value $u_{0}$ so that the corresponding solution of \eqref{eq11}-\eqref{eq13} blows up in a finite time. More precisely
\begin{th1} \cite{chipot}
	\label{explosion}
	Let $p>1,\ \ 1\leq q\leq \frac{2p}{p+1}$ and $u_{0}\in W^{3,s}(\Omega)$ for $s$ satisfying the same conditions as in theorem \ref{regularitechipot}, $u_{0}$ not identically zero. In addition, we suppose that:
	\begin{enumerate}
		\item $u_{0}=0$ on $\partial\Omega.$
		\item $\Delta u_{0}-|\nabla u_{0}|^{q}+|u_{0}|^{p}=0$ on $\partial\Omega.$
		\item $u_{0}\geq0$ in $\Omega.$
		\item $\Delta u_{0}-|\nabla u_{0}|^{q}+u_{0}^{p}\geq0$ in $\Omega.$
		\item $E(u_{0})=\frac{1}{2}\left\|\nabla u_{0}\right\|_{2}^{2}-\frac{1}{p+1}\left\|u_{0}\right\|_{p+1}^{p+1}\leq0.$
		\item If $q<\frac{2p}{p+1}$ then $\left\|u_{0}\right\|_{p+1}$ is sufficiently large.
		\item If $q=\frac{2p}{p+1}$ then $p$ is sufficiently large.
	\end{enumerate}
	Then, the corresponding solution of \eqref{eq11}-\eqref{eq13} blows up in finite time in the $L^{\infty}$ norm.
\end{th1}
The obvious difficulty with this result is that it is not at all clear if such a $u_{0}$ exists.\\

Souplet and Weissler have proved in \cite{phweissler} that in a possibly unbounded regular domain $\Omega$, finite time blow up occurs in $W^{1,s}_{0}$ norm ($s$ sufficiently large), for large data whenever $p>q$, and they give an estimation for the blow up time $T^{*}$. More precisely, we have :
\begin{th1} \cite{phweissler}
	\label{tempsexplosion}
	Assume $p>q$ and let $\psi \in W^{1,s}_{0}(\Omega)$ ($s$ sufficiently large) with $\psi \geq 0,\ (\psi \not\equiv 0)$.
	\begin{enumerate}
		\item There exists some $\lambda_{0}=\lambda_{0}(\psi)>0$ such that for all $ \lambda>\lambda_{0}$, the solution of \eqref{eq11}-\eqref{eq13} with initial data $\phi=\lambda \psi$ blows up in finite time in $W^{1,s}_{0}$ norm. If either $\Omega$ is bounded or $q<2$, then blow up occurs in $L^{\infty}$ norm.
		\item If $\Omega$ is bounded, there is some $C(\psi)>0$ such that 
		\begin{equation*}
	 \dfrac{1}{(p-1)(\lambda \left| \psi \right|_{\infty})^{p-1}}	\leq T^{*}(\lambda \psi)\leq \dfrac{C(\psi)}{(\lambda \left| \psi \right|_{\infty})^{p-1}},\ \ \ \lambda\rightarrow \infty.
		\end{equation*}
	\end{enumerate}
\end{th1}
\noindent For more details about the result of regularity of the solution and the conditions of blow up, see \cite{chipot, hessaaraki, pavol, poincare, phweissler} and the references therein.\\ 

In \cite{filacheb, ph, souplettayachi}, authors have proved the following estimations about the blow-up rate:
\begin{th1}
	Assume $q<\dfrac{2p}{p+1}, \ p\leq 1+\dfrac{2}{d+1}$ and let $u\geq 0$ be a solution of \eqref{eq11}-\eqref{eq13} such that $T^{*}<+\infty$. Then, we have
	\begin{equation*}
	C_{1}\left(T^{*}-t\right)^{-\frac{1}{p-1}}\leq \left\|u(t)\right\|_{\infty}\leq 	C_{2}\left(T^{*}-t\right)^{-\frac{1}{p-1}} \ \ \ \text{ as } t\longrightarrow T^{*}.
	\end{equation*}
	\label{tauxexplosion}
\end{th1}
In \cite{hessaaraki}, Hesaaraki and Moameni have proved that for small values of initial data, solutions of \eqref{eq11}-\eqref{eq13} can not blow up in finite time whenever $\Omega$ is  a bounded domain in $\mathbb{R}^{d}.$
 \begin{th1}
 	Let $\Omega$ be a bounded domain. If $u_{0}\geq 0$ and if $u$ is the solution of \eqref{eq11}-\eqref{eq13}, then there exists an $\epsilon>0$ such that, if $\left|u_{0}\right|_{C^{1}(\bar{\Omega})}<\epsilon $ then $u$ is a global solution and decays exponentially.
 	\label{petitedonnee}
 \end{th1}
The phenomenon of blow up in finite time for nonlinear parabolic equations has been extensively studied for last decades. Several papers contain numerous references on blow up results, see for example \cite{bandle}, \cite{friedman}, \cite{fujita}, \cite{hayakawa} and \cite{levine}. There has been many works concerning numerical computation of solutions of nonlinear parabolic equation (see \cite{Chen}, \cite{cho}, \cite{hamada} and \cite{nakagawa}) but without the gradient term. By studying various papers, we found that many interesting numerical problems for the Chipot-Weissler equation are left unsolved, and we would like to solve some of them in this and other forthcoming works. The results of this paper are used to study the properties of the numerical solution associated  with \eqref{eq11}-\eqref{eq13} and hence we can assimilate the dissipative role of the gradient term (\cite{hani}).\\

Although the details are explained in the subsequent sections, we outline here the main ideas of this study. Let us recall a result of Chen (\cite{Chen}) where he considered the Fujita equation $u_{t}=u_{xx}+u^{1+\alpha}$ and proved that some numerical solutions can blow up in finite time at more than one point. The following questions may naturally arise:
\begin{enumerate}
	\item What happens numerically, if we add a gradient term in the Fujita equation?
	\item Which conditions on the reaction term and the gradient term provide or prevent blow-up?
	\item Can we compare the blow-up rates of the equations with and without gradient term?
\end{enumerate}
In this paper, we develop a numerical scheme in order to approximate the solutions of the nonlinear Chipot-Weissler equation in $\Omega=(-1,1)$, and we show that the finite difference solution blows up in finite time if a certain condition is assumed. Next in \cite{hani}, we study the numerical blow up set and the asymptotic behaviours of our numerical solution near the blow up point, we prove the local convergence of the numerical solution to the exact one, we also try to give an approximation of the blow up time.\\

Our paper is organized as follows: In the next section, we present some properties of the exact solution. In section 3, we construct a finite difference scheme and we prove that if the initial data is positive, monotone and symmetric then the numerical solution is also positive, monotone and symmetric. In section 4, we shall prove that the difference solution blows up in $x=0$ the middle of the interval $(-1,1)$. In section 5, we give some numerical results to illustrate our analysis. In the last section, we present some interesting questions which will be solved in the future study.
\section{Properties of the exact solution}
We consider the semilinear equation
\begin{equation}
u_{t}=u_{xx}+|u|^{p-1}u-|u_{x}|^{q}  \text {  in   } ]0,T[\times]-1,1[,
\label{e21}
\end{equation}
with initial data
\begin{equation}
u(0,x)=u_{0}(x)  \text {  for   } x\in [-1,1]
\label{22}
\end{equation}
and Dirichlet boundary conditions
\begin{equation}
u(t,-1)=u(t,1)=0 \text { for  } t\in [0,T].  
\label{23}
\end{equation}
where $p>1$, $1\leq q\leq \frac{2p}{p+1}$ and $T<T^{*}.$\\
Here if $t\in [0,T]$ then $\left\|u(t,.)\right\|_{\infty}:=\max\limits_{x}|u(t,x)|<\infty.$\\
For the sake of simplicity, we assume that the initial data $u_{0}$ satisfies the following conditions:\\
\textbf{(A1)} $u_{0}\in C^{1}((-1,1))$, nonconstant and nonnegative in $[-1,1].$\\
\textbf{(A2)} $u_{0}$ is spatially symmetric about $x=0.$\\
\textbf{(A3)} $u_{0}$ is strictly monotone increasing in $[-1,0].$\\
\textbf{(A4)} $u_{0}$ is large in the sense that $\left\|u_{0}\right\|_{\infty}>>1.$\\
\textbf{(A5)} $u_{0}(-1)=u_{0}(1)=0.$\\
These properties will be preserved by our numerical scheme and make computations easier.\\
Under these conditions, it is known from \cite{Cheblikfila} that the solution blows up only at the central point, that is
\begin{equation*}
\exists \ \ T^{*}<+\infty \text{ such that } \lim_{t\rightarrow T^{*}}u(t,0)=+\infty \text{ but } \lim_{t\rightarrow T^{*}}u(t,x)<\infty \text{  when } x\neq 0.
\end{equation*}
In this section, we study some properties of $u$ the classical solution of \eqref{e21}-\eqref{23} with initial data $u_{0}$ satisfying \textbf{(A1)-(A5)}. 
\subsection{Positivity}
Chipot and Weissler have proved in \cite{chipot} that
\begin{lem}
	\label{positiviteexacte}
	For $s$ sufficiently large such that $s\geq 2q$, if $u_{0}\in W^{1,s}_{0}((-1,1))$ with $u_{0}\geq 0$ in $[-1,1]$, then $u(t)\geq 0$ for all $t\in [0,T^{*}).$
\end{lem}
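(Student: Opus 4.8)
The plan is to establish positivity by a parabolic minimum principle, the only genuinely new feature being the degenerate gradient term, which I will show is harmless because it is evaluated at a critical point of $u$ wherever a minimum is attained in the interior. First I would fix $T<T^{*}$ and work on the slab $[0,T]\times[-1,1]$; by Theorem \ref{regularitechipot} the solution $u$ is classical there and both $\|u(t)\|_{\infty}$ and $\|u_{x}(t)\|_{\infty}$ are bounded, so that $M:=\sup_{[0,T]\times[-1,1]}|u|^{p-1}<\infty$. To neutralise the unfavourable sign of the reaction term I would introduce the exponential shift $w:=e^{-\lambda t}u$ with a constant $\lambda>M$; substituting $u=e^{\lambda t}w$ into \eqref{e21} yields
\begin{equation*}
w_{t}-w_{xx}+\bigl(\lambda-|u|^{p-1}\bigr)w=-e^{-\lambda t}|u_{x}|^{q}\leq 0,
\end{equation*}
whose zeroth-order coefficient $c:=\lambda-|u|^{p-1}$ is everywhere strictly positive. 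The parabolic boundary data of $w$ are nonnegative, since $w(0,\cdot)=u_{0}\geq 0$ by assumption and $w(t,\pm 1)=0$ by \eqref{23}.

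Next I would argue by contradiction. Assume $\min_{[0,T]\times[-1,1]}w<0$; since the parabolic boundary values are nonnegative, this minimum is attained at an interior point $(t_{0},x_{0})$ with $t_{0}\in(0,T]$ and $x_{0}\in(-1,1)$. There one has $w_{x}(t_{0},x_{0})=0$, $w_{xx}(t_{0},x_{0})\geq 0$ and $w_{t}(t_{0},x_{0})\leq 0$. The vanishing of $w_{x}$ forces $u_{x}(t_{0},x_{0})=0$, so the right-hand side of the displayed equation is $0$ at this point, whereas its left-hand side equals $w_{t}-w_{xx}+c\,w<0$, because $w_{t}\leq 0$, $-w_{xx}\leq 0$ and $c\,w<0$ (indeed $c>0$ by the choice of $\lambda$ and $w<0$ at the minimum). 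This strict inequality contradicts the equality with $0$; hence $w\geq 0$, that is $u\geq 0$ on $[0,T]$, and since $T<T^{*}$ was arbitrary, $u\geq 0$ on $[0,T^{*})$.

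I expect the obstacle to be twofold. Conceptually, the gradient nonlinearity could be written as $b(t,x)u_{x}$ with $b:=|u_{x}|^{q-1}\mathrm{sgn}(u_{x})$, which is bounded by the gradient estimate of Theorem \ref{regularitechipot} but only measurable, so applying a maximum principle with such a rough drift is delicate; the route taken above sidesteps this entirely, since $u_{x}$ vanishes at any interior spatial extremum and hence $|u_{x}|^{q}$ never enters the decisive inequality. It is equally essential to retain the exponential shift: the term $c\,w=(\lambda-|u|^{p-1})w$ is strictly negative at a negative minimum precisely because $\lambda>M$ dominates the reaction coefficient, and this strict negativity is what produces the contradiction; with $\lambda=0$ one would have $c\,w\geq 0$ and the argument would fail to close. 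Technically, the pointwise argument presupposes enough regularity, whereas Theorem \ref{regularitechipot} only furnishes $u\in C((0,T^{*});W^{2,s/q})$; for this one-dimensional problem with $s$ large, Sobolev embedding gives $u\in C^{1}$ in space with second derivatives defined almost everywhere, so one applies the strong minimum principle for $W^{2,1}_{\mathrm{loc}}$ supersolutions, or approximates $u$ by smooth solutions and passes to the limit.
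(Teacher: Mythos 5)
The paper never proves Lemma \ref{positiviteexacte}: it is quoted from Chipot--Weissler \cite{chipot} without proof, so your argument has to be judged on its own and against the way the paper handles the analogous statements it does prove. Your mechanism is sound, and it is the right one: after the exponential shift $w=e^{-\lambda t}u$ with $\lambda>\sup_{[0,T]\times[-1,1]}|u|^{p-1}$, the gradient term --- whose sign is otherwise unfavourable, since $u_{t}-u_{xx}-|u|^{p-1}u=-|u_{x}|^{q}\leq 0$ makes $u$ a \emph{subsolution} of the problem without gradient term, the wrong direction for a minimum principle --- simply vanishes at an interior negative minimum because $w_{x}=0$ there forces $u_{x}=0$, and the strict inequality $c\,w<0$ closes the contradiction. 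Two points need patching, though. First, you invoke Theorem \ref{regularitechipot} for arbitrary $u_{0}\in W^{1,s}_{0}$, but that theorem is stated for $u_{0}\in D$; for general data you need the density and continuous-dependence step that the paper itself performs at the start of the proof of Lemma \ref{symetrieexacte}. Second --- the gap you flag yourself --- evaluating $w_{xx}$ pointwise at the minimum is not licensed by $u\in C\left((0,T^{*});W^{2,s/q}\right)$: the second derivative exists only almost everywhere in $x$, and nothing prevents your minimum point from lying in the exceptional set. Your proposed repairs do work (in one space dimension, with $s/q$ large, $u$ is a strong solution with $u_{t},u_{xx}\in L^{s/q}$ locally, so the parabolic maximum principle for strong solutions of Krylov/ABP type applies; alternatively approximate and pass to the limit), but as written this is a sketch of a fix, not the fix.

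It is worth noting that the technique the paper actually uses for the exact solution's monotony (Lemma \ref{monotonieexacte}) and for the positivity of the discrete solution --- multiply by the negative part, integrate over the domain, absorb the gradient term by Young's inequality using the a priori bound on $u_{x}$ from Theorem \ref{regularitechipot}, and conclude by Gronwall --- proves this lemma directly with no pointwise second derivatives at all: multiplying \eqref{e21} by $u^{-}$ and repeating that computation gives $\frac{d}{dt}\int (u^{-})^{2}\,dx\leq C\int (u^{-})^{2}\,dx$ and hence $u^{-}\equiv 0$, entirely at the regularity the paper actually has. So the energy route is the one most consistent with the paper's toolkit and is gap-free, while your pointwise route, once the two patches above are made, is more elementary and makes transparent \emph{why} the gradient term is harmless (it is evaluated at a spatial critical point); both are legitimate, but yours requires the extra regularity discussion to be a complete proof.
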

\subsection{Symmetry}
\begin{lem}
	\label{symetrieexacte}
	We suppose that the initial data $u_{0}\in W_{0}^{1,s}((-1,1))$ and satisfies \textbf{(A1)-(A5)}, then the exact solution $u$ of \eqref{e21}-\eqref{23} is symmetric, that is:
	\begin{equation*}
	\forall x\in [-1,1],\ \ u(t,x)=u(t,-x) \ \ \forall t\in [0,T^{*}).
	\end{equation*}
\end{lem}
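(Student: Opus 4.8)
The plan is to exploit uniqueness of the solution together with the reflection symmetry of the problem. First I would introduce the reflected function $v(t,x):=u(t,-x)$ defined on the same cylinder $[0,T^{*})\times[-1,1]$, and check that it solves exactly the same initial-boundary value problem as $u$. Then, by the local existence and uniqueness statement of Theorem~\ref{regularitechipot}, I would conclude $u\equiv v$, which is precisely the claimed identity $u(t,x)=u(t,-x)$.

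The heart of the matter is to verify that $v$ satisfies \eqref{e21}. Differentiating through the reflection gives $v_{t}(t,x)=u_{t}(t,-x)$, $v_{xx}(t,x)=u_{xx}(t,-x)$ and $v_{x}(t,x)=-u_{x}(t,-x)$. The decisive point is that the gradient nonlinearity is even in $u_{x}$: since
\[
|v_{x}(t,x)|^{q}=|-u_{x}(t,-x)|^{q}=|u_{x}(t,-x)|^{q},
\]
the sign introduced by the chain rule is annihilated by the absolute value, while likewise $|v|^{p-1}v=|u(t,-x)|^{p-1}u(t,-x)$. Substituting $u_{t}(t,-x)$ from \eqref{e21} then yields $v_{t}=v_{xx}+|v|^{p-1}v-|v_{x}|^{q}$, so $v$ solves the same equation. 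For the boundary conditions, $v(t,\pm 1)=u(t,\mp 1)=0$ by \eqref{23}, and for the initial data assumption \textbf{(A2)} gives $v(0,x)=u_{0}(-x)=u_{0}(x)$. Hence $v$ and $u$ solve the very same problem.

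It then remains to invoke uniqueness, and the step I expect to be the main obstacle is precisely the verification that $v$ belongs to the function class in which uniqueness holds, i.e. that it is an admissible solution of the integral equation \eqref{integrale}. This should follow because the reflection $x\mapsto -x$ is an isometry of $(-1,1)$ that preserves the spaces $W^{1,s}_{0}$ and $W^{2,s/q}$, and because the Dirichlet heat semigroup $e^{t\Delta}$ on the symmetric interval commutes with reflection; consequently $v$ inherits the regularity of $u$ and satisfies \eqref{integrale} with the reflected data $u_{0}(-\cdot)=u_{0}$. Once this commutation is established, uniqueness from Theorem~\ref{regularitechipot} forces $u=v$ and the symmetry follows, everything else being a routine application of the chain rule, the key structural fact being the evenness of $w\mapsto|w|^{q}$ in the gradient term.
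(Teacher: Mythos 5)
Your proof is correct, and it rests on the same basic mechanism as the paper's — produce a reflected solution and invoke uniqueness, the decisive structural fact in both cases being that $w\mapsto|w|^{q}$ is even, so the sign coming from the chain rule is harmless — but the implementation is genuinely different. The paper does not reflect globally: it defines $v$ piecewise, as $u(t,-x)$ on $[-1,0]$ and as $u(t,x)$ itself on $[0,1]$, verifies the PDE separately on each half, and concludes by uniqueness; moreover, it first restricts to $u_{0}\in D$ so as to work with classical solutions, recovering general $u_{0}\in W^{1,s}_{0}$ afterwards by continuous dependence on the initial data and the embedding $W^{1,s}_{0}\subset C(\bar{\Omega})$. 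Your global reflection $v(t,x)=u(t,-x)$ on all of $[-1,1]$ is cleaner on one delicate point: the paper's folded function is a priori only continuous at the gluing point, since $v_{x}(t,0^{\pm})=\pm u_{x}(t,0)$, so its admissibility as a solution across $x=0$ is essentially the symmetry one is trying to prove — a subtlety the paper passes over — whereas your $v$ inherits the full regularity of $u$ with no gluing at all. The price is the one you correctly single out as the main obstacle: checking that $v$ lies in the uniqueness class, which you settle by noting that the reflection preserves $W^{1,s}_{0}$ and $W^{2,s/q}$ and commutes with the Dirichlet heat semigroup on the symmetric interval, so that $v$ satisfies the integral equation \eqref{integrale} with data $u_{0}(-\cdot)=u_{0}$. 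Working at the level of the mild formulation in this way also lets you dispense with the paper's density/continuous-dependence step, so your route is both complete and somewhat more economical.
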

\begin{proof}
	Let first assume that $u_{0}\in D$, by theorem \ref{regularite} and using continuous dependence of the solutions to the initial data in $ W_{0}^{1,s}((-1,1))$ (\cite{poincare}, Prop. A1) and the embedding $ W_{0}^{1,s}(\Omega)\subset C(\bar{\Omega})$, we can show that the result of lemma \ref{symetrieexacte} is true for all $u_{0}\in  W_{0}^{1,s}((-1,1))$.\\
	Let $u(t,x)$ be the solution of \eqref{e21}-\eqref{23}. We define, for all $t\geq 0$, the function
	$$
	v(t,x)=
	\left\{
	\begin{array}{lll}
	u(t,-x) \ \  x\in [-1,0]\\
	u(t,x) \ \  x\in [0,1].\\
	\end{array}
	\right.
	$$
	We shall prove that $v$ is a solution of \eqref{e21}-\eqref{23} in $[-1,1].$
	\begin{itemize}
		\item In $[0,1],$ we have $v(t,x)=u(t,x)$. Then for all $x\in ]0,1[ \text{ and } t\geq 0,\ v$ satisfies,
		\begin{equation}
		\left\{
		\begin{array}{lll}
		\dfrac{\partial v}{\partial t}(t,x)-\dfrac{\partial^{2} v}{\partial x^{2}}(t,x)-\left|v(t,x)\right|^{p}+\left|\dfrac{\partial v}{\partial x}(t,x)\right|^{q}=0\\
		v(t,1)=0.\\
		\end{array}
		\right.
		\label{sys1}
		\end{equation}	  
		Then $v$ is a solution of \eqref{e21} in $[0,1[.$
		\item In $]-1,0],$ we have $v(t,x)=u(t,-x).$ Then for $x\in ]-1,0]\ \ and\ \ t\geq 0$ we get
		\begin{eqnarray*}
			\nonumber &&\dfrac{\partial v}{\partial t}(t,x)-\frac{\partial^{2} v}{\partial x^{2}}(t,x)-\left|v(t,x)\right|^{p}+\left|\dfrac{\partial v}{\partial x}(t,x)\right|^{q}\\
			&=&\dfrac{\partial u}{\partial t}(t,-x)-\dfrac{\partial^{2} u}{\partial x^{2}}(t,-x)-\left|u(t,-x)\right|^{p}+\left|\dfrac{\partial u}{\partial x}(t,-x)\right|^{q}.\\
		\end{eqnarray*}
		Since $-x\in[0,1[$ and $u(t,-x)$ is a solution in $[0,1[$, we deduce then that
		\begin{equation}
		\left\{
		\begin{array}{lll}
		\dfrac{\partial v}{\partial t}(t,x)-\dfrac{\partial^{2} v}{\partial x^{2}}(t,x)-\left|v(t,x)\right|^{p}+\left|\dfrac{\partial v}{\partial x}(t,x)\right|^{q}=0\\
		v(t,-1)=0.\\
		\end{array}
		\right.
		\label{sys2}
		\end{equation}
		Using \textbf{(A2)}, we obtain
		\begin{equation}
		v(0,x)=u_{0}\ \ \forall x\in [-1,1].
		\label{sys3}
		\end{equation}
		Finally, by \eqref{sys1}, \eqref{sys2}, \eqref{sys3} and by unicity of the solution of \eqref{e21}-\eqref{23}, we get $u=v \ \ in \ \ [-1,1].$\\ 
		This finishes the proof of the symmetry.
	\end{itemize}
\end{proof}
\subsection{Monotony}
\begin{lem}
	\label{monotonieexacte}
	We suppose that the initial data $u_{0}\in W_{0}^{1,s}((-1,1))$ and satisfies \textbf{(A1)-(A5)}, then the exact solution $u$ of \eqref{e21}-\eqref{23} is increasing in $[-1,0[$, that is:
	\begin{equation*}
	\forall x \in [-1,0[\ \text{ and }  \ \ t\geq 0 \ \text{ we have }\ \  \dfrac{\partial u}{\partial x}(t,x)\geq 0.
	\end{equation*}
\end{lem}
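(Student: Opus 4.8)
The plan is to differentiate the equation in $x$ and apply a parabolic minimum principle to $w:=u_x$ on the strip $[0,T]\times[-1,0]$. First, since $u_0\geq 0$, Lemma \ref{positiviteexacte} gives $u(t,\cdot)\geq 0$, so on the whole interval $|u|^{p-1}u=u^p$ and \eqref{e21} reads $u_t=u_{xx}+u^p-|u_x|^q$. Differentiating formally in $x$ and writing $w=u_x$, I obtain the linear parabolic equation
\begin{equation}
w_t - w_{xx} + q|u_x|^{q-1}\operatorname{sgn}(u_x)\,w_x - p\,u^{p-1} w = 0.
\label{eqw}
\end{equation}
By Theorem \ref{regularite}(3), $\|u(t)\|_{\infty}$ and $\|u_x(t)\|_{\infty}$ are bounded on $[0,T]$, so both the drift coefficient $b(t,x)=q|u_x|^{q-1}\operatorname{sgn}(u_x)$ (bounded because $q\geq 1$) and the zeroth-order coefficient $c(t,x)=-p\,u^{p-1}$ are bounded on $[0,T]\times[-1,0]$.

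Next I would record the data of $w$ on the parabolic boundary of $[0,T]\times[-1,0]$. At the initial time, $w(0,x)=u_0'(x)\geq 0$ on $[-1,0]$ by assumption \textbf{(A3)}. At $x=0$, the symmetry $u(t,x)=u(t,-x)$ of Lemma \ref{symetrieexacte} forces $w(t,0)=u_x(t,0)=0$. At $x=-1$, since $u(t,-1)=0$ and $u(t,x)\geq 0$ for $x>-1$, the one-sided difference quotient $u(t,x)/(x+1)$ is nonnegative, whence $w(t,-1)=u_x(t,-1)\geq 0$. Thus $w\geq 0$ on the whole parabolic boundary.

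To conclude $w\geq 0$ in the interior I would apply the minimum principle to \eqref{eqw}. Because $c$ has no definite sign, I first substitute $w=e^{\lambda t}z$ with $\lambda$ larger than $\sup_{[0,T]} p\,u^{p-1}$; then $z$ solves an equation of the same type with zeroth-order coefficient $\lambda-p\,u^{p-1}\geq 0$, for which the weak minimum principle applies directly. Since $z$ and $w$ share the same sign and $z\geq 0$ on the parabolic boundary, the principle yields $z\geq 0$, hence $w=u_x\geq 0$ on $[0,T]\times[-1,0]$, as claimed.

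The main obstacle is rigor in deriving \eqref{eqw}: the map $s\mapsto|s|^q$ is only $C^1$ for $q>1$ and merely Lipschitz for $q=1$, so $b$ may be discontinuous where $u_x$ vanishes, and differentiating \eqref{e21} classically requires the $W^{2,s/q}$ regularity of Theorem \ref{regularite}(2). I expect this to be handled either by noting that the weak maximum principle only needs boundedness, not continuity, of the coefficients, or by an approximation argument replacing $|s|^q$ by the smooth function $(\varepsilon^2+s^2)^{q/2}$ and passing to the limit $\varepsilon\to 0$; crucially, the boundary value $w(t,0)=0$ coming from symmetry is what keeps the argument confined to $[-1,0]$, where one never needs to know $\operatorname{sgn}(u_x)$ in advance.
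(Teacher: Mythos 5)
Your argument is sound and reaches the correct conclusion, but it takes a genuinely different route from the paper's proof. The two share the same starting point: differentiate the equation to obtain the same linear parabolic equation for $w=u_x$ (the paper's equation for $v=\partial u/\partial x$), and use Theorem \ref{regularitechipot}(3) to bound the coefficients before blow-up. From there, however, the paper never invokes a pointwise minimum principle; it runs an energy (truncation) argument on the \emph{whole} interval $[-1,1]$: multiply the differentiated equation by $v^{-}=\max(0,-u_x)$, integrate by parts, show the boundary term has a sign by evaluating the PDE at the endpoints (Dirichlet gives $u_{xx}(t,1)=\left|u_x(t,1)\right|^{q}\geq 0$), then apply Young's inequality and Gronwall's lemma, starting from $(v^{-})(0)\equiv 0$ (assumptions \textbf{(A2)}--\textbf{(A3)}), to conclude $v^{-}\equiv 0$. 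Your route---the weak minimum principle on the half-cylinder $[0,T]\times[-1,0]$ with data $w(0,\cdot)=u_0'\geq 0$, $w(t,0)=0$ from symmetry, $w(t,-1)\geq 0$ from positivity, plus the exponential shift $w=e^{\lambda t}z$---is more elementary and avoids both the boundary-term bookkeeping and Gronwall; your remark that the drift multiplies $w_x$, which vanishes at interior extrema, so that boundedness rather than continuity of the drift suffices, correctly disposes of the $q=1$ case. What the paper's method buys in exchange is weaker regularity demands: the energy argument needs only $L^{2}$-type estimates and $L^{2}$-continuity of $v$ at $t=0$ (available from $u\in C^{1}([0,T^{*});W_0^{1,s})$ with $s\geq 2$), whereas the classical minimum principle requires $w$ to be $C^{2,1}$ in the interior and continuous up to the parabolic boundary including $t=0$, which Theorem \ref{regularitechipot} does not directly supply for general $u_0\in W_0^{1,s}$. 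To make your version complete you should import the paper's preliminary reduction---assume $u_0\in D$ and then pass to general data by continuous dependence and density, exactly as the paper does at the start of its proofs of Lemmas \ref{symetrieexacte} and \ref{monotonieexacte}---together with your proposed regularization of $|s|^{q}$; note that the paper's own computation is equally formal on this point, since it too differentiates the equation and manipulates $v_{xx}$ pointwise before integrating.
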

\begin{proof}
	Similar to the proof of lemma \ref{symetrieexacte}, here we assume that $u_{0}\in D$. Let $u(t,x)$ be the solution of \eqref{e21}-\eqref{23} in $[-1,0[ $, and $v(t,x)=\dfrac{\partial u}{\partial x}(t,x).$\\
	For all $x\in [-1,0[$ and $t>0$, we have
	\begin{equation*}
	\dfrac{\partial u}{\partial t}(t,x)-\dfrac{\partial^{2} u}{\partial x^{2}}(t,x)-u^{p}(t,x)+\left|\dfrac{\partial u}{\partial x}(t,x)\right|^{q}=0.
	\end{equation*}
	Then
	\begin{equation}
	\dfrac{\partial v}{\partial t}(t,x)-\dfrac{\partial^{2} v}{\partial x^{2}}(t,x)-pu^{p-1}(t,x)v(t,x)+q\dfrac{\partial v}{\partial x}sign(v)\left|v\right|^{q-1}(t,x)=0.
	\label{mono}
	\end{equation}
	Let 
	\begin{equation*}
	v^{-}=\max(0,-v).
	\end{equation*}
	Multiplying \eqref{mono} by $v^{-}(t,x)$, we get
	\begin{eqnarray*}
		&&\dfrac{\partial v}{\partial t}v^{-}-\dfrac{\partial^{2} v}{\partial x^{2}}v^{-}-pu^{p-1}vv^{-}-q\dfrac{\partial v}{\partial x}v^{-}\left|v\right|^{q-1}=0\\
		&\Rightarrow& -\dfrac{\partial v^{-}}{\partial t}v^{-}+\dfrac{\partial^{2} v^{-}}{\partial x^{2}}v^{-}+pu^{p-1}(v^{-})^{2}+q\dfrac{\partial v^{-}}{\partial x}v^{-}\left|v^{-}\right|^{q-1}=0.
	\end{eqnarray*}
	Integrating over $[-1,1]$, we get
	\begin{equation*}
		\int_{-1}^{1}{\dfrac{\partial v^{-}}{\partial t}v^{-}dx}-\int_{-1}^{1}{\dfrac{\partial^{2} v^{-}}{\partial x^{2}}v^{-}dx}-p\int_{-1}^{1}{u^{p-1}(v^{-})^{2}dx}-q\int_{-1}^{1}{\dfrac{\partial v^{-}}{\partial x}(v^{-})^{q}dx}=0
	\end{equation*}
	then
	\begin{equation*} \dfrac{1}{2}\dfrac{d}{dt}\int_{-1}^{1}{(v^{-})^{2}dx}-\left(\left[\dfrac{\partial v^{-}}{\partial x}v^{-}\right]_{-1}^{1}-\int_{-1}^{1}{\left(\dfrac{\partial v^{-}}{\partial x}\right)^{2}dx}\right)-p\int_{-1}^{1}{u^{p-1}(v^{-})^{2}dx}
	-q\int_{-1}^{1}{\dfrac{\partial v^{-}}{\partial x}(v^{-})^{q}dx}=0.
	\end{equation*}
	But
	\begin{eqnarray}
	\nonumber \left[\dfrac{\partial v^{-}}{\partial x}v^{-}\right]_{-1}^{1}&=&\dfrac{\partial v^{-}}{\partial x}(t,1)v^{-}(t,1)-\dfrac{\partial v^{-}}{\partial x}(t,-1)v^{-}(t,-1)\\
	\nonumber &=&2v^{-}(t,1)\dfrac{\partial v^{-}}{\partial x}(t,1)\\
	&=&-2v^{-}(t,1)\dfrac{\partial v}{\partial x}(t,1).
	\label{mon}
	\end{eqnarray}
	On the other hand, from \eqref{23}, we have 
	\begin{equation*}
	u^{p}(t,1)=\dfrac{\partial u}{\partial t}(t,1)=0,
	\end{equation*}
	then by \eqref{e21} 
	\begin{equation*}
	\dfrac{\partial v}{\partial x}(t,1)=\dfrac{\partial^{2} u}{\partial x^{2}}(t,1)=\left|\dfrac{\partial u}{\partial x}(t,1)\right|^{q}\geq 0,
	\end{equation*}
	and then we can deduce from \eqref{mon} that
	\begin{equation*}
	\left[\dfrac{\partial v^{-}}{\partial x}v^{-}\right]_{-1}^{1}\leq 0.
	\end{equation*}
	So
	\begin{eqnarray*}
		&&\dfrac{1}{2}\dfrac{d}{dt}\int_{-1}^{1}{(v^{-})^{2}dx}+\int_{-1}^{1}{\left(\dfrac{\partial v^{-}}{\partial x}\right)^{2}dx}-p\int_{-1}^{1}{u^{p-1}(v^{-})^{2}dx}-q\int_{-1}^{1}{\dfrac{\partial v^{-}}{\partial x}(v^{-})^{q}dx}\\
		&=&\left[\dfrac{\partial v^{-}}{\partial x}v^{-}\right]_{-1}^{1}\\
		&\leq& 0.
	\end{eqnarray*}
	Using the symmetry property, we get
	\begin{eqnarray*}
		&&\dfrac{d}{dt}\int_{-1}^{0}{(v^{-})^{2}dx}+2\int_{-1}^{0}{\left(\dfrac{\partial v^{-}}{\partial x}\right)^{2}dx}-2p\int_{-1}^{0}{u^{p-1}(v^{-})^{2}dx}-2q\int_{-1}^{0}{\dfrac{\partial v^{-}}{\partial x}(v^{-})^{q}dx}\leq 0\\
		&&\Rightarrow \dfrac{1}{2}\dfrac{d}{dt}\int_{-1}^{0}{(v^{-})^{2}dx}\leq -\int_{-1}^{0}{\left(\dfrac{\partial v^{-}}{\partial x}\right)^{2}dx}+p\int_{-1}^{0}{u^{p-1}(v^{-})^{2}dx}+q\left|\int_{-1}^{0}{\dfrac{\partial v^{-}}{\partial x}(v^{-})^{q}dx}\right|.
	\end{eqnarray*}
	We refer to theorem \ref{regularite} proved in \cite{chipot}, we can see that $u$ and $\nabla u$ are bounded before blow up, then there exists $M,\ N>0$ such that
	\begin{eqnarray*}
		&&\left|u^{p-1}(t,x)\right|\leq M \text{ for all } x\in[-1,1] \text{ and } 0<t<T,\\
		&&\left|\nabla u(t,x)\right|\leq N \text{ for all } x\in[-1,1] \text{ and } 0<t<T.
	\end{eqnarray*} 
	We use Young's inequality, then for all $\epsilon>0$, there exists $C_{\epsilon}>0$, such that
	\begin{equation*}
	\left|\int_{-1}^{0}{\dfrac{\partial v^{-}}{\partial x}(v^{-})^{q}dx}\right|\leq \epsilon \int_{-1}^{0}{\left(\dfrac{\partial v^{-}}{\partial x}\right)^{2}dx}+C_{\epsilon}\int_{-1}^{0}{(v^{-})^{2q}dx}.
	\end{equation*}
	Which implies that
	\begin{equation*}
	\left|\int_{-1}^{0}{\dfrac{\partial v^{-}}{\partial x}(v^{-})^{q}dx}\right|\leq \epsilon \int_{-1}^{0}{\left(\dfrac{\partial v^{-}}{\partial x}\right)^{2}dx}+C_{\epsilon}N^{2q-2}\int_{-1}^{0}{(v^{-})^{2}dx}.
	\end{equation*}
	Then we get
	\begin{eqnarray*}
		\dfrac{1}{2}\dfrac{d}{dt}\int_{-1}^{0}{(v^{-})^{2}dx}&\leq& -\int_{-1}^{0}{\left(\dfrac{\partial v^{-}}{\partial x}\right)^{2}dx}+M\int_{-1}^{0}{(v^{-})^{2}dx}+\epsilon q \int_{-1}^{0}{\left(\dfrac{\partial v^{-}}{\partial x}\right)^{2}dx}\\
		&& \ \ \ \ \ +C_{\epsilon}q N^{2q-2}\int_{-1}^{0}{(v^{-})^{2}dx}\\
		&=&(\epsilon q-1)\int_{-1}^{0}{\left(\dfrac{\partial v^{-}}{\partial x}\right)^{2}dx}+M_{1}\int_{-1}^{0}{(v^{-})^{2}dx},
	\end{eqnarray*}
	where $M_{1}$ is a constant depending on $N,\ M,\ q$ and $C_{\epsilon}.$\\
	For $\epsilon$ sufficiently small we get
	\begin{equation*}
	\dfrac{d}{dt}\int_{-1}^{0}{(v^{-})^{2}dx}\leq 2 M_{1}\int_{-1}^{0}{(v^{-})^{2}dx}.
	\end{equation*}
	Integrating over $[0,t]$, for $t \leq T$, we get
	\begin{equation*}
	\int_{-1}^{0}{(v^{-})^{2}(t,x)dx}\leq \int_{-1}^{0}{(v^{-})^{2}(0,x)dx}+M\int_{0}^{t}{\int_{-1}^{0}{(v^{-})^{2}(s,x)dx}ds}.
	\end{equation*}
	From \textbf{(A2)} and \textbf{(A3)}, we have
	\begin{equation*}
	v^{-}(0,x)=\left(\dfrac{\partial u}{\partial x}\right)^{-}(0,x)=\left(\dfrac{\partial u_{0}}{\partial x}\right)^{-}(x)=0.
	\end{equation*}
	Then using Gronwall lemma, we deduce that $\forall x\in [-1,0[,\ t\geq 0,\ \ v^{-}(t,x)=0$, which achieves the proof of monotony.
\end{proof}
\section{Full discretization}
 We consider the semilinear parabolic equation for $d=1$ and $\Omega=(-1,1)$
\begin{equation}
\left\{ 
\begin{array}{llll}
u_{t}=u_{xx}+|u|^{p-1}u-|u_{x}|^{q} \  \text {  in   } \ (0,T)\times(-1,1),\\
u(0,x)=u_{0}(x) \  \text {  for   }\  x\in (-1,1),\\
u(t,-1)=u(t,1)=0 \  \text { for  }\  t\in (0,T).  \\
\end{array}
\right.
\label{exacte}
\end{equation}
In this section, we construct a finite difference scheme which solution satisfies the same properties proved above.\\
Throughout this paper, we use the following notations in the list below:
\begin{enumerate}
	\item $\tau: $ size parameter for the variable time mesh $\tau_{n}$.
	\item $h: $ positive parameter for which $\lambda:=\dfrac{\tau}{h^{2}}<\dfrac{1}{16}$ kept fixed.	 
	\item $t_{n}:\ \ n$-th time step determined as:
	$$
	\left\{ 
	\begin{array}{lll}
	t_{0}=0\\
	t_{n}=t_{n-1}+\tau_{n-1}=\displaystyle\sum\limits_{k=0}^{n-1}{\tau_{k}}, \ n\geq 1.\\
	\end{array}
	\right.\\
	$$ 	 
	\item $x_{j}:\  j$-th net point on $[-1,1]$ determined as:
	$$
	\left\{ 
	\begin{array}{lll}
	x_{0}=-1\\
	x_{j}=x_{j-1}+h_{n},\  j\geq 1 \  and \  n\geq 0\\
	x_{N_{n}+1}=1.
	\end{array}
	\right.\\
	$$
	\item $u_{j}^{n}: $ approximate of $u(t_{n},x_{j}).$
	\item $\tau_{n}: $ discrete time increment of $(n+1)-$th step determined by $\tau_{n}=\tau \min\left(1,\left\|u^{n}\right\|_{\infty}^{-p+1}\right)$ for $n\geq 0.$ 
	\item $h_{n}: $ discrete space increment of $(n+1)-$th step determined by $h_{n}=\min\left(h,\left(2\left\|u^{n}\right\|_{\infty}^{-q+1}\right)^{\frac{1}{2-q}}\right)$ for $n\geq 0.$ 
	\item $N_{n}=E\left(\dfrac{1}{h_{n}}-1\right)+1$ where $E(X)$ is the integer part of $X$.
\end{enumerate}
Under the assumption that a spatial net point $x_{m}$ coincides with the middle point $x=0 $ (we can easily achieve this by taking $N_{n}+1=2m$), we will prove that
\begin{equation*}
u_{m}^{n}=\max_{0\leq j\leq N_{n}+1}|u_{j}^{n}|=\left\|u^{n}\right\|_{\infty} \text{ and } u_{m-1}^{n}=\max_{j\neq m}|u_{j}^{n}|.
\end{equation*} 
By using the notations above, our difference equation is introduced by: for $j=1,...,N_{n}$ and $n\geq 0$:
	\begin{equation}
	\left\{ 
	\begin{array}{lllll}
	\dfrac{u_{j}^{n+1}-u_{j}^{n}}{\tau_{n}}=\dfrac{u_{j+1}^{n+1}-2u_{j}^{n+1}+u_{j-1}^{n+1}}{h_{n}^{2}}+(u_{j}^{n})^{p}-\dfrac{1}{(2h_{n})^{q}}\left|u_{j+1}^{n}-u_{j-1}^{n}\right|^{q-1}\left|u_{j+1}^{n+1}-u_{j-1}^{n+1}\right|\\
	u_{j}^{0}=u_{0}(x_{j})\\
	u_{0}^{n}=u_{N_{n}+1}^{n}=0.\\
	\end{array}
	\right.
	\label{approchee}
	\end{equation}
We denote by $U^{n}:=(u_{0}^{n},...,u_{N_{n}+1}^{n})^{t}.$\\
\begin{rem}
	\begin{enumerate}
	\item Under the assumptions \textbf{(A1)-(A5)} of the initial data, problem \eqref{approchee} has a unique solution and the proof is given in the proof of lemma \ref{symetrienum}.
	\item In this work, we study some properties of the exact solution of \eqref{exacte}, the numerical solution of \eqref{approchee} and we show that we have the same properties, this is independently of the convergence of the scheme. The convergence of solution of \eqref{approchee} to the solution of \eqref{exacte} is proved in \cite{hani} and the proof is based on these properties and other properties of the numerical solution discussed in \cite{hani}. 
	\item  In \cite{hani}, we have proved the local convergence of the numerical solution to the exact one far from the blow-up set. 
	\end{enumerate}
\end{rem}
Next, we prove that the difference solution has the same properties as the exact one.
\subsection{Positivity:}
We will prove now the analogue of lemma \ref{positiviteexacte} for the difference solution.
\begin{lem}
	Suppose $u_{0}$ is a positive function in $[-1,1]$. Let $U^{n}$ be the solution of \eqref{approchee}, then we have $U^{n}\geq 0$ for all $n\geq 0.$
\end{lem}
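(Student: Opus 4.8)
The plan is to argue by induction on $n$ through a discrete minimum principle, with the whole difficulty concentrated in controlling the implicit, sign‑unfavourable gradient term; it is precisely this term that forces the adaptive choice of the space step $h_n$. For $n=0$ nonnegativity is immediate from $u_{j}^{0}=u_{0}(x_{j})$ and the positivity of $u_{0}$. So I would assume $U^{n}\geq 0$ and prove $U^{n+1}\geq 0$. Let $j^{\ast}$ be an index at which $u_{j}^{n+1}$ attains its minimum over $0\leq j\leq N_{n}+1$. Since the boundary conditions give $u_{0}^{n+1}=u_{N_{n}+1}^{n+1}=0$, if this minimum were negative it would necessarily be attained at an interior node $j^{\ast}\in\{1,\dots,N_{n}\}$, where $u_{j^{\ast}\pm 1}^{n+1}\geq u_{j^{\ast}}^{n+1}$ and hence the discrete Laplacian $u_{j^{\ast}+1}^{n+1}-2u_{j^{\ast}}^{n+1}+u_{j^{\ast}-1}^{n+1}$ is nonnegative. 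It therefore suffices to rule out $u_{j^{\ast}}^{n+1}<0$.

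Writing $\lambda_{n}=\tau_{n}/h_{n}^{2}$ and $c_{j}^{n}=\frac{\tau_{n}}{(2h_{n})^{q}}\,|u_{j+1}^{n}-u_{j-1}^{n}|^{q-1}\geq 0$, evaluating \eqref{approchee} at $j=j^{\ast}$ and grouping the implicit terms gives
\[
u_{j^{\ast}}^{n+1}(1+2\lambda_{n})=u_{j^{\ast}}^{n}+\tau_{n}(u_{j^{\ast}}^{n})^{p}+\lambda_{n}\bigl(u_{j^{\ast}+1}^{n+1}+u_{j^{\ast}-1}^{n+1}\bigr)-c_{j^{\ast}}^{n}\bigl|u_{j^{\ast}+1}^{n+1}-u_{j^{\ast}-1}^{n+1}\bigr|.
\]
At the minimum both $a:=u_{j^{\ast}+1}^{n+1}-u_{j^{\ast}}^{n+1}$ and $b:=u_{j^{\ast}-1}^{n+1}-u_{j^{\ast}}^{n+1}$ are nonnegative, so $|u_{j^{\ast}+1}^{n+1}-u_{j^{\ast}-1}^{n+1}|=|a-b|\leq a+b$, whence $-c_{j^{\ast}}^{n}|u_{j^{\ast}+1}^{n+1}-u_{j^{\ast}-1}^{n+1}|\geq -c_{j^{\ast}}^{n}(a+b)$. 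Substituting this estimate and collecting the coefficients of $u_{j^{\ast}\pm 1}^{n+1}$ leads to
\[
u_{j^{\ast}}^{n+1}\bigl(1+2\lambda_{n}-2c_{j^{\ast}}^{n}\bigr)\geq u_{j^{\ast}}^{n}+\tau_{n}(u_{j^{\ast}}^{n})^{p}+(\lambda_{n}-c_{j^{\ast}}^{n})\bigl(u_{j^{\ast}+1}^{n+1}+u_{j^{\ast}-1}^{n+1}\bigr).
\]

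The key point is then the CFL‑type bound $\lambda_{n}\geq c_{j}^{n}$ for every interior $j$. Since $|u_{j+1}^{n}-u_{j-1}^{n}|\leq 2\|u^{n}\|_{\infty}$, this reduces to $h_{n}^{2-q}\leq 2\,\|u^{n}\|_{\infty}^{1-q}$, which is exactly the constraint encoded in the definition $h_{n}=\min\!\bigl(h,(2\|u^{n}\|_{\infty}^{-q+1})^{1/(2-q)}\bigr)$ (here one uses $1\leq q<2$). Granting $\lambda_{n}-c_{j^{\ast}}^{n}\geq 0$ together with $u_{j^{\ast}\pm 1}^{n+1}\geq u_{j^{\ast}}^{n+1}$, the last display yields, after the coefficient of $u_{j^{\ast}}^{n+1}$ collapses to $1$, the inequality $u_{j^{\ast}}^{n+1}\geq u_{j^{\ast}}^{n}+\tau_{n}(u_{j^{\ast}}^{n})^{p}\geq 0$, contradicting $u_{j^{\ast}}^{n+1}<0$. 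Hence the minimum of $U^{n+1}$ is nonnegative and the induction closes. Existence and uniqueness of the (implicit, nonlinear) solution $U^{n+1}$ is taken from the first item of the remark, proved together with lemma \ref{symetrienum}. The main obstacle is precisely the negative implicit gradient term, which is tamed by the triangle estimate $|a-b|\leq a+b$ valid at the minimum, combined with the step‑size bound built into $h_{n}$.
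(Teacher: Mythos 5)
Your proof is correct, but it takes a genuinely different route from the paper's. You run a pointwise discrete minimum principle: a negative minimum of $U^{n+1}$ must occur at an interior node, the implicit gradient term is tamed there by $|a-b|\le a+b$ (valid because $a,b\ge 0$ at a minimum), and the CFL-type bound $\lambda_n\ge c_j^n$ collapses the coefficient of $u_{j^{*}}^{n+1}$ to $1$, giving $u_{j^{*}}^{n+1}\ge u_{j^{*}}^{n}+\tau_n(u_{j^{*}}^{n})^{p}\ge 0$, a contradiction; your algebra here checks out. The paper instead argues in a discrete $\ell^{2}$ framework: it multiplies the scheme by $(u_j^{n+1})^{-}$, sums over $j$, performs discrete integration by parts, bounds the explicit gradient factor by $(M_n/h_n)^{q-1}$, applies Young's inequality with $\epsilon=\tfrac12$, and arrives at the Gronwall-type estimate $\bigl(1-4\tau_{n}(M_{n}/h_{n})^{2(q-1)}\bigr)\left\|(U^{n+1})^{-}\right\|^{2}\leq \left\|(U^{n})^{-}\right\|^{2}$, from which $(U^{n})^{-}=0$ propagates by induction. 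Both proofs pivot on the same structural fact, namely the adaptive space step: your inequality $\lambda_n\ge c_j^n$ is exactly the paper's inequality \eqref{hypothesepositive} multiplied by $\tau_n$, and it is also the bound $\alpha_i^n\le\lambda_n$ that the paper uses later to get strict diagonal dominance in the proof of lemma \ref{symetrienum}. What your argument buys: it is more elementary (no Young's inequality, no summation by parts) and it never invokes the extra global mesh condition $\lambda=\tau/h^{2}<\tfrac{1}{16}$, which the paper needs to ensure the prefactor $1-4\tau_n(M_n/h_n)^{2(q-1)}$ stays positive; it also yields the sharper pointwise information $u_{j}^{n+1}\ge u_{j}^{n}+\tau_n(u_{j}^{n})^{p}$ at a minimum. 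What the paper's approach buys: it is the exact discrete replica of the continuous argument (the proof of lemma \ref{monotonieexacte} multiplies by $v^{-}$ and integrates), and the identical machinery is then reused almost verbatim for the discrete monotony lemma applied to $v_j^n=u_{j+1}^n-u_j^n$, so one energy template serves all three discrete properties. Finally, your deferral of existence and uniqueness of the implicit solution $U^{n+1}$ to the remark is consistent with the paper's own treatment, so no gap arises there relative to the paper.
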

\begin{proof}
	In view of the assumption that the initial data satisfies $u_{0}\geq 0$ and using the notation $U^{n}:=(u_{0}^{n},...,u_{N_{n}+1}^{n})^{t}$, we see that $U^{n}\geq 0$ holds for $n=0$. Supposing that it holds for some $n\geq 0$, we have to show that $U^{n+1}\geq 0$.\\
	For all $j=1,...,N_{n}$, define
	\begin{equation*}
	(u_{j}^{n+1})^{-}:=\max_{j}(0,-u_{j}^{n+1}) \text{  and  } (u_{j}^{n+1})^{+}:=\max_{j}(0,u_{j}^{n+1}).
	\end{equation*}
	If we multiply the first equation of \eqref{approchee} by $(u_{j}^{n+1})^{-}$ we obtain
	\begin{eqnarray*}
		\dfrac{u_{j}^{n+1}-u_{j}^{n}}{\tau_{n}}(u_{j}^{n+1})^{-} =&&\dfrac{u_{j+1}^{n+1}-2u_{j}^{n+1}+u_{j-1}^{n+1}}{h_{n}^{2}}(u_{j}^{n+1})^{-}+(u_{j}^{n})^{p}(u_{j}^{n+1})^{-} \\
		&&-\dfrac{1}{(2h_{n})^{q}}\left|u_{j+1}^{n}-u_{j-1}^{n}\right|^{q-1}\left|u_{j+1}^{n+1}-u_{j-1}^{n+1}\right|(u_{j}^{n+1})^{-}.
	\end{eqnarray*}
	We use
	\begin{align*}
	u_{j}^{n+1}=(u_{j}^{n+1})^{+}-(u_{j}^{n+1})^{-} \ \ and \ \ (u_{j}^{n+1})^{+}.(u_{j}^{n+1})^{-}=0.
	\end{align*}
	Then we have
	{\footnotesize
		\begin{eqnarray*}
			& -&\dfrac{(u_{j}^{n+1})^{-}-(u_{j}^{n})^{-}}{\tau_{n}}(u_{j}^{n+1})^{-}-\dfrac{1}{h_{n}^{2}}(u_{j+1}^{n+1})^{+}(u_{j}^{n+1})^{-}+\dfrac{1}{h_{n}}\frac{(u_{j+1}^{n+1})^{-}-(u_{j}^{n+1})^{-}}{h_{n}}(u_{j}^{n+1})^{-}\\
			&&-\dfrac{1}{h_{n}^{2}}(u_{j-1}^{n+1})^{+}(u_{j}^{n+1})^{-}-\dfrac{1}{h_{n}}\dfrac{(u_{j}^{n+1})^{-}-(u_{j-1}^{n+1})^{-}}{h_{n}}(u_{j}^{n+1})^{-}
			+\dfrac{1}{2}\left|\dfrac{u_{j+1}^{n}-u_{j-1}^{n}}{2h_{n}}\right|^{q-1}\left|\dfrac{u_{j+1}^{n+1}-u_{j-1}^{n+1}}{h_{n}}\right|(u_{j}^{n+1})^{-}\\
			&=& \dfrac{1}{\tau_{n}}(u_{j}^{n})^{+}(u_{j}^{n+1})^{-}+(u_{j}^{n})^{p}(u_{j}^{n+1})^{-} \\
			&\geq & 0
		\end{eqnarray*}
	}
	We multiply by $(-1)$ and we sum for $j=1,...,N_{n}$, we obtain
	{\footnotesize
		\begin{eqnarray*}
			&&\sum_{j=1}^{N_{n}}{\dfrac{(u_{j}^{n+1})^{-}-(u_{j}^{n})^{-}}{\tau_{n}}(u_{j}^{n+1})^{-}}
			+\dfrac{1}{h_{n}^{2}}\sum_{j=1}^{N_{n}}{(u_{j+1}^{n+1})^{+}(u_{j}^{n+1})^{-}}-\dfrac{1}{h_{n}}\sum_{j=1}^{N_{n}}{{\dfrac{(u_{j+1}^{n+1})^{-}-(u_{j}^{n+1})^{-}}{h_{n}}(u_{j}^{n+1})^{-}}} \\
			&&+\dfrac{1}{h_{n}^{2}}\sum_{j=1}^{N_{n}}{(u_{j-1}^{n+1})^{+}(u_{j}^{n+1})^{-}}
			+\dfrac{1}{h_{n}}\sum_{j=1}^{N_{n}}{\dfrac{(u_{j}^{n+1})^{-}-(u_{j-1}^{n+1})^{-}}{h_{n}}(u_{j}^{n+1})^{-}} \\ 
			& \leq & \dfrac{1}{2}\sum_{j=1}^{N_{n}}\dfrac{1}{(2h)^{q}}\left|u_{j+1}^{n}-u_{j-1}^{n}\right|^{q-1}\left|u_{j+1}^{n+1}-u_{j-1}^{n+1}\right|(u_{j}^{n+1})^{-}.
		\end{eqnarray*}
	}
	But 
	\begin{equation*}
	\sum_{j=1}^{N_{n}}{(u_{j-1}^{n+1})^{+}(u_{j}^{n+1})^{-}}=\sum_{j=1}^{N_{n}}{(u_{j}^{n+1})^{+}(u_{j+1}^{n+1})^{-}},
	\end{equation*}
	and 
	\begin{equation*}
	\sum_{j=1}^{N_{n}}{{\dfrac{(u_{j}^{n+1})^{-}-(u_{j-1}^{n+1})^{-}}{h_{n}}(u_{j}^{n+1})^{-}}}=\sum_{j=1}^{N_{n}}{{\dfrac{(u_{j+1}^{n+1})^{-}-(u_{j}^{n+1})^{-}}{h_{n}}(u_{j+1}^{n+1})^{-}}}+\dfrac{((u_{1}^{n+1})^{-})^{2}}{h_{n}}.
	\end{equation*}
	Then
	{\footnotesize
		\begin{eqnarray*}
			&&\sum_{j=1}^{N_{n}}{\dfrac{(u_{j}^{n+1})^{-}-(u_{j}^{n})^{-}}{\tau_{n}}(u_{j}^{n+1})^{-}}
			+\dfrac{1}{h_{n}^{2}}\sum_{j=1}^{N_{n}}{(u_{j+1}^{n+1})^{+}(u_{j}^{n+1})^{-}}-\dfrac{1}{h_{n}}\sum_{j=1}^{N_{n}}{{\dfrac{(u_{j+1}^{n+1})^{-}-(u_{j}^{n+1})^{-}}{h_{n}}(u_{j}^{n+1})^{-}}} \\
			&&+\dfrac{1}{h_{n}^{2}}\sum_{j=1}^{N_{n}}{(u_{j-1}^{n+1})^{+}(u_{j}^{n+1})^{-}}
			+\dfrac{1}{h_{n}}\sum_{j=1}^{N_{n}}{\dfrac{(u_{j}^{n+1})^{-}-(u_{j-1}^{n+1})^{-}}{h_{n}}(u_{j}^{n+1})^{-}} \\ 
			&=&\sum_{j=1}^{N_{n}}{\dfrac{(u_{j}^{n+1})^{-}-(u_{j}^{n})^{-}}{\tau_{n}}(u_{j}^{n+1})^{-}}+\dfrac{1}{h_{n}^{2}}\sum_{j=1}^{N_{n}}{\left((u_{j+1}^{n+1})^{+}(u_{j}^{n+1})^{-}+(u_{j}^{n+1})^{+}(u_{j+1}^{n+1})^{-}\right)}\\
			&&+\sum_{j=1}^{N_{n}}\left({{\dfrac{(u_{j+1}^{n+1})^{-}-(u_{j}^{n+1})^{-}}{h_{n}}}}\right)^{2}+\left(\dfrac{(u_{1}^{n+1})^{-}}{h_{n}}\right)^{2}.
		\end{eqnarray*}
	}
	We use that $M_{n}:=\left\|U^{n}\right\|_{\infty}<+\infty$ before blow-up, we can write that 
	\begin{equation*}
	\dfrac{|u_{j+1}^{n}-u_{j-1}^{n}|}{2h_{n}}\leq \dfrac{M_{n}}{h_{n}},
	\end{equation*}
	which implies that
	{\footnotesize
		\begin{eqnarray*}
			&& \sum_{j=1}^{N_{n}}{\dfrac{(u_{j}^{n+1})^{-}-(u_{j}^{n})^{-}}{\tau_{n}}(u_{j}^{n+1})^{-}}+\dfrac{1}{h_{n}^{2}}\sum_{j=1}^{N_{n}}{\left((u_{j+1}^{n+1})^{+}(u_{j}^{n+1})^{-}+(u_{j}^{n+1})^{+}(u_{j+1}^{n+1})^{-}\right)}\\
			&& +\sum_{j=1}^{N_{n}}\left({{\dfrac{(u_{j+1}^{n+1})^{-}-(u_{j}^{n+1})^{-}}{h_{n}}}}\right)^{2}+\left(\dfrac{(u_{1}^{n+1})^{-}}{h_{n}}\right)^{2}\\
			& \leq&  \dfrac{1}{2}\sum_{j=1}^{N_{n}}\dfrac{1}{(2h)^{q}}\left|u_{j+1}^{n}-u_{j-1}^{n}\right|^{q-1}|u_{j+1}^{n+1}-u_{j-1}^{n+1}|(u_{j}^{n+1})^{-}\\
			& \leq& \dfrac{1}{2}\left(\dfrac{M_{n}}{h_{n}}\right)^{q-1}\dfrac{1}{h_{n}}\sum_{j=1}^{N_{n}}{\left((u_{j+1}^{n+1})^{+}(u_{j}^{n+1})^{-}+(u_{j}^{n+1})^{+}(u_{j+1}^{n+1})^{-}\right)}
			+\dfrac{1}{2}\left(\dfrac{M_{n}}{h_{n}}\right)^{q-1}\dfrac{((u_{1}^{n+1})^{-})^{2}}{h_{n}}\\
			&& +\dfrac{1}{2}\left(\dfrac{M_{n}}{h_{n}}\right)^{q-1}\sum_{j=1}^{N_{n}}{\dfrac{\left|(u_{j+1}^{n+1})^{-}-(u_{j}^{n+1})^{-}\right|}{h_{n}}((u_{j+1}^{n+1})^{-}+(u_{j}^{n+1})^{-})}.
		\end{eqnarray*}
	}
	We define now the operator
	\begin{equation*}
	(D(U^{n+1})^{-})_{j}:=\dfrac{(u_{j+1}^{n+1})^{-}-(u_{j}^{n+1})^{-}}{h_{n}} \text{ for }   j=1,...,N_{n}
	\end{equation*}
	and we denote by $\left\|X\right\|=\sum\limits_{j=1}^{N_{n}}{(X_{j})^{2}}$, then we have
	\begin{eqnarray*}
		&&\left(\dfrac{1}{h_{n}^{2}}-\dfrac{1}{2}\left(\dfrac{M_{n}}{h_{n}}\right)^{q-1}\frac{1}{h_{n}}\right)\sum_{j=1}^{N_{n}}{\left((u_{j+1}^{n+1})^{+}(u_{j}^{n+1})^{-}+(u_{j}^{n+1})^{+}(u_{j+1}^{n+1})^{-}\right)}\\
		&& \ \ \ +\sum_{j=1}^{N_{n}}{\dfrac{(u_{j}^{n+1})^{-}-(u_{j}^{n})^{-}}{\tau_{n}}(u_{j}^{n+1})^{-}}+\left\|D(U^{n+1})^{-}\right\|^{2}\\
		&&\ \ \ \ +\left(\dfrac{1}{h_{n}^{2}}-\dfrac{1}{2}\left(\dfrac{M_{n}}{h_{n}}\right)^{q-1}\dfrac{1}{h_{n}}\right)\left((u_{1}^{n+1})^{-}\right)^{2}\\
		&\leq& \dfrac{1}{2}\left(\dfrac{M_{n}}{h_{n}}\right)^{q-1}\sum_{j=1}^{N_{n}}{\dfrac{\left|(u_{j+1}^{n+1})^{-}-(u_{j}^{n+1})^{-}\right|}{h_{n}}\left((u_{j+1}^{n+1})^{-}+(u_{j}^{n+1})^{-}\right)}.
	\end{eqnarray*}
	Using the definition of $h_{n}$, we get 
	\begin{equation*}
	h_{n}=\min\left(h,(2\left\|u^{n}\right\|_{\infty}^{-q+1})^{\frac{1}{2-q}}\right)\leq \left(2\left\|u^{n}\right\|_{\infty}^{-q+1}\right)^{\frac{1}{2-q}}=\left(2M_{n}^{-q+1}\right)^{\frac{1}{2-q}},
	\end{equation*}
	which implies that 
	\begin{equation}
	\dfrac{1}{h_{n}^{2}}-\dfrac{1}{2}\left(\dfrac{M_{n}}{h_{n}}\right)^{q-1}\dfrac{1}{h_{n}} > 0,
	\label{hypothesepositive}
	\end{equation}
	and we obtain
	\begin{eqnarray*}
		&& \sum_{j=1}^{N_{n}}{\dfrac{(u_{j}^{n+1})^{-}-(u_{j}^{n})^{-}}{\tau_{n}}(u_{j}^{n+1})^{-}}+\left\|D(U^{n+1})^{-}\right\|^{2}\\
		&\leq& \dfrac{1}{2}\left(\dfrac{M_{n}}{h_{n}}\right)^{q-1}\sum_{j=1}^{N_{n}}{\dfrac{\left|(u_{j+1}^{n+1})^{-}-(u_{j}^{n+1})^{-}\right|}{h_{n}}\left((u_{j+1}^{n+1})^{-}+(u_{j}^{n+1})^{-}\right)}.
	\end{eqnarray*}
	For the second term, we use Young's inequality
	\begin{equation*}
	\forall \epsilon>0,\ \ \ ab\leq \epsilon a^{2}+\dfrac{1}{\epsilon} b^{2}
	\end{equation*}
	to obtain that, for all $\epsilon >0$
	\begin{eqnarray*}
		&& \dfrac{1}{2}\left(\dfrac{M_{n}}{h_{n}}\right)^{q-1}\sum_{j=1}^{N_{n}}{\dfrac{\left|(u_{j+1}^{n+1})^{-}-(u_{j}^{n+1})^{-}\right|}{h_{n}}\left((u_{j+1}^{n+1})^{-}+(u_{j}^{n+1})^{-}\right)} \\
		&=& \displaystyle
		\sum_{j=1}^{N_{n}}{\left(\dfrac{\left|(u_{j+1}^{n+1})^{-}-(u_{j}^{n+1})^{-}\right|}{h_{n}}\right)\left(\dfrac{1}{2}\left(\dfrac{M_{n}}{h_{n}}\right)^{q-1}\left((u_{j+1}^{n+1})^{-}+(u_{j}^{n+1})^{-}\right)\right)} \\
		& \leq& \epsilon \sum_{j=1}^{N_{n}}{\left(\dfrac{|(u_{j+1}^{n+1})^{-}-(u_{j}^{n+1})^{-}|}{h_{n}}\right)^{2}}+\dfrac{1}{4\epsilon}\left(\dfrac{M_{n}}{h_{n}}\right)^{2(q-1)}\sum_{j=1}^{N_{n}}{\left((u_{j+1}^{n+1})^{-}+(u_{j}^{n+1})^{-}\right)^{2}}\\
		& \leq& \epsilon \left\|D(U^{n+1})^{-}\right\|^{2}+\dfrac{1}{4\epsilon}\left(\dfrac{M_{n}}{h_{n}}\right)^{2(q-1)}\sum_{j=1}^{N_{n}}{\left((u_{j+1}^{n+1})^{-}+(u_{j}^{n+1})^{-}\right)^{2}} \\ 
		& \leq &\epsilon \left\|D(U^{n+1})^{-}\right\|^{2}+\dfrac{1}{\epsilon}\left(\dfrac{M_{n}}{h_{n}}\right)^{2(q-1)}\left\|(U^{n+1})^{-}\right\|^{2}.
	\end{eqnarray*} 
	If we take $\epsilon =\dfrac{1}{2}$ then we obtain
	\begin{equation*}
	\sum_{j=1}^{N_{n}}{\dfrac{(u_{j}^{n+1})^{-}-(u_{j}^{n})^{-}}{\tau_{n}}(u_{j}^{n+1})^{-}}+\left\|D(U^{n+1})^{-}\right\|^{2}
	\leq \dfrac{1}{2} \left\|D(U^{n+1})^{-}\right\|^{2}+2\left(\dfrac{M_{n}}{h_{n}}\right)^{2(q-1)}\left\|(U^{n+1})^{-}\right\|^{2},
	\end{equation*}
	and hence we get
	\begin{equation*}
	\sum_{j=1}^{N_{n}}{\left((u_{j}^{n+1})^{-}-(u_{j}^{n})^{-}\right)(u_{j}^{n+1})^{-}}\leq 2\tau_{n}\left(\dfrac{M_{n}}{h_{n}}\right)^{2(q-1)}\left\|(U^{n+1})^{-}\right\|^{2}.
	\end{equation*}
	We use
	\begin{equation*}
	(a-b)a=\dfrac{a^{2}}{2}-\dfrac{b^{2}}{2}+\dfrac{(a-b)^{2}}{2},
	\end{equation*}
	then we have
	\begin{equation*}
	\dfrac{1}{2}\sum_{j=1}^{N_{n}}{(u_{j}^{n+1})^{-}}^{2}-\dfrac{1}{2}\sum_{j=1}^{N_{n}}{(u_{j}^{n})^{-}}^{2}+\dfrac{1}{2}\sum_{j=1}^{N_{n}}{((u_{j}^{n+1})^{-}-(u_{j}^{n})^{-})^{2}}\leq 2\tau_{n}\left(\dfrac{M_{n}}{h_{n}}\right)^{2(q-1)}\left\|(U^{n+1})^{-}\right\|^{2}
	\end{equation*}
	which implies
	\begin{equation*}
	\left(1-4\tau_{n}\left(\dfrac{M_{n}}{h_{n}}\right)^{2(q-1)}\right)\left\|(U^{n+1})^{-}\right\|^{2}\leq \left\|(U^{n})^{-}\right\|^{2}.
	\end{equation*}
	We use that $\lambda =\dfrac{\tau}{h^{2}}<\dfrac{1}{16}$, we can verify that $$1-4\tau_{n}\left(\dfrac{M_{n}}{h_{n}}\right)^{2(q-1)}>0$$ 
	and finally we have 
	\begin{equation*}
	\left\|(U^{n+1})^{-}\right\|^{2}\leq \dfrac{1}{1-4\tau_{n}\left(\dfrac{M_{n}}{h_{n}}\right)^{2(q-1)}}\left\|(U^{n})^{-}\right\|^{2}.
	\end{equation*}
	By $U^{n}\geq 0$, we have $(U^{n})^{-}=0$, which implies that $(U^{n+1})^{-}=0$, this gives $U^{n+1}\geq 0.$
\end{proof}
\subsection{Monotony: }
The following result, analogue of lemma \ref{monotonieexacte}, establishes monotony for the difference solution:
\begin{lem}
	Under the assumptions \textbf{(A1)-(A5)}, let $U^{n}$ be the solution of \eqref{approchee} and $m=\dfrac{N_{n}+1}{2}$, then we have
	\begin{equation*}
	\left\{
	\begin{array}{lll}
	0<u_{j}^{n}<u_{j+1}^{n} \text{ for } j=1,...,m-1 \text{ and } n\geq 0 \\
	0<u_{j+1}^{n}<u_{j}^{n} \text{ for } j=m,...,N_{n} \text{ and } n\geq 0 \\
	\end{array}
	\right.
	\end{equation*}
\end{lem}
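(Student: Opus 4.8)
The plan is to argue by induction on $n$ and to reduce everything to the left half of the mesh.

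\emph{Reduction.} The discrete symmetry established in Lemma~\ref{symetrienum} gives $u_{j}^{n}=u_{N_{n}+1-j}^{n}$, so it is enough to prove $u_{j}^{n}<u_{j+1}^{n}$ for $0\le j\le m-1$; the decreasing inequalities on $j=m,\dots,N_{n}$ then follow by reflecting about $j=m$, and strict positivity on the interior is obtained by combining the monotonicity with $u_{0}^{n}=0$ and the positivity lemma proved above. Introduce the discrete gradient $v_{j}^{n}:=u_{j+1}^{n}-u_{j}^{n}$ and write $(v_{j}^{n})^{-}=\max(0,-v_{j}^{n})$; the goal becomes $(V^{n})^{-}=0$ on the left half. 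For $n=0$ this is exactly assumption \textbf{(A3)}, which makes $u_{0}$ strictly increasing on $[-1,0]$, hence $v_{j}^{0}>0$.

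\emph{Differenced scheme.} Assume $(V^{n})^{-}=0$. Subtracting \eqref{approchee} at index $j$ from the same equation at index $j+1$ and writing $(u_{j+1}^{n})^{p}-(u_{j}^{n})^{p}=p(\theta_{j}^{n})^{p-1}v_{j}^{n}$ with $\theta_{j}^{n}$ between $u_{j}^{n}$ and $u_{j+1}^{n}$ (mean value theorem), one finds that $v$ solves the discrete analogue of \eqref{mono},
\begin{equation*}
\frac{v_{j}^{n+1}-v_{j}^{n}}{\tau_{n}}=\frac{v_{j+1}^{n+1}-2v_{j}^{n+1}+v_{j-1}^{n+1}}{h_{n}^{2}}+p(\theta_{j}^{n})^{p-1}v_{j}^{n}-\bigl(g_{j+1}^{n}-g_{j}^{n}\bigr),
\end{equation*}
where $g_{j}^{n}=\frac{1}{(2h_{n})^{q}}|u_{j+1}^{n}-u_{j-1}^{n}|^{q-1}|u_{j+1}^{n+1}-u_{j-1}^{n+1}|$. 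I would then follow the energy strategy of Lemma~\ref{monotonieexacte}, transcribed into the discrete bookkeeping of the positivity lemma.

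\emph{Energy estimate.} Multiply the differenced scheme by $(v_{j}^{n+1})^{-}$, use $v_{j}^{n+1}=(v_{j}^{n+1})^{+}-(v_{j}^{n+1})^{-}$ with $(v_{j}^{n+1})^{+}(v_{j}^{n+1})^{-}=0$, and sum over $j$, performing the same discrete summation by parts as in the positivity proof. By Theorem~\ref{regularite} the quantities $|u_{j}^{n}|$ and $|u_{j+1}^{n}-u_{j-1}^{n}|/(2h_{n})$ remain bounded by constants $M$ and $M_{n}/h_{n}$ before blow up. The reaction difference contributes $\sum_{j}p(\theta_{j}^{n})^{p-1}v_{j}^{n}(v_{j}^{n+1})^{-}$, which is $\ge 0$ because the induction hypothesis forces $v_{j}^{n}\ge 0$; exactly as the nonnegative reaction term $(u_{j}^{n})^{p}(u_{j}^{n+1})^{-}$ in the positivity lemma, it carries the favourable sign and is simply discarded. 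The crucial point is the treatment of the gradient difference: using $|u_{j+1}^{n}-u_{j-1}^{n}|/(2h_{n})\le M_{n}/h_{n}$, the mixed products $(v^{n+1})^{+}(v^{n+1})^{-}$ produced by $g_{j+1}^{n}-g_{j}^{n}$ must be made to cancel against the corresponding products coming from the discrete Laplacian, precisely as the coefficient $\frac{1}{h_{n}^{2}}-\frac12(M_{n}/h_{n})^{q-1}\frac{1}{h_{n}}>0$ of \eqref{hypothesepositive} guarantees. After this cancellation only pure $(V^{n+1})^{-}$ quantities survive, and Young's inequality bounds what remains by $\epsilon\|D(V^{n+1})^{-}\|^{2}+C_{\epsilon}(M_{n}/h_{n})^{2(q-1)}\|(V^{n+1})^{-}\|^{2}$. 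Choosing $\epsilon$ small and invoking $\lambda=\tau/h^{2}<\tfrac1{16}$ yields $\bigl(1-4\tau_{n}(M_{n}/h_{n})^{2(q-1)}\bigr)\|(V^{n+1})^{-}\|^{2}\le\|(V^{n})^{-}\|^{2}$ with positive prefactor; since $(V^{n})^{-}=0$ by induction, $(V^{n+1})^{-}=0$, i.e. $v_{j}^{n+1}\ge 0$.

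\emph{Strictness and conclusion.} To upgrade $v_{j}^{n+1}\ge 0$ to the strict inequalities, apply a discrete strong maximum principle to the tridiagonal implicit system for $v^{n+1}$: if $v_{j}^{n+1}=0$ at an interior index, the equation forces the neighbouring differences to vanish and this propagates to the left endpoint, where $v_{0}^{n+1}=u_{1}^{n+1}>0$ by the strict positivity injected by the reaction term, a contradiction. This gives $0<u_{j}^{n+1}<u_{j+1}^{n+1}$ on the left half, and symmetry supplies the decreasing inequalities on the right, closing the induction. \emph{The main obstacle} is the gradient difference $g_{j+1}^{n}-g_{j}^{n}$: unlike the single gradient term handled in the positivity lemma, its new-time factors $|u_{j+1}^{n+1}-u_{j-1}^{n+1}|=|v_{j}^{n+1}+v_{j-1}^{n+1}|$ involve the full (order-one) gradient, so the estimate only closes if one reproduces, for this differenced term, the exact cancellation of the mixed $(v^{n+1})^{+}(v^{n+1})^{-}$ products against the Laplacian that the condition $\lambda<\tfrac1{16}$ was designed to ensure.
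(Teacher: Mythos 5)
Your induction-plus-energy strategy is exactly the paper's: difference the scheme to get an equation for $v_{j}^{n}=u_{j+1}^{n}-u_{j}^{n}$, multiply by $(v_{j}^{n+1})^{-}$, sum, discard the reaction difference and one part of the differenced gradient term by sign, bound the other part by $(M_{n}/h_{n})^{q-1}$ exactly as \eqref{hypothesepositive} allows, and close with Young's inequality and $\lambda<\frac{1}{16}$. However, your reduction step contains a genuine circularity: you invoke the discrete symmetry of Lemma \ref{symetrienum} to transfer the increasing inequalities on the left half to the decreasing ones on the right half. In the paper, Lemma \ref{symetrienum} is proved \emph{after}, and \emph{by means of}, the present monotonicity lemma: to rewrite the implicit scheme \eqref{approchee} as the linear system $QU^{n+1}=V^{n}$ one must resolve the absolute values $\left|u_{j+1}^{n+1}-u_{j-1}^{n+1}\right|$ at the new time level, and this sign resolution --- as well as the unique solvability of the scheme itself (see the remark following \eqref{approchee}) --- rests on positivity \emph{and} monotony at level $n+1$. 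So symmetry is not available when proving monotony. The repair is what the paper does: run the identical energy argument a second time on the right half, applied to $v_{j}^{n}=u_{j}^{n}-u_{j+1}^{n}$ for $m\le j\le N_{n}$.

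A second, more technical gap: your claim that the summation by parts is ``the same as in the positivity proof'' and yields the prefactor $1-4\tau_{n}(M_{n}/h_{n})^{2(q-1)}$ overlooks a boundary term. In the positivity lemma the sum runs over the whole mesh and the Dirichlet values kill both endpoint contributions; here the sum runs only over $j=1,\dots,m-1$, and while the left endpoint is harmless (since $v_{0}^{n+1}=u_{1}^{n+1}\ge 0$ by positivity), the right endpoint produces the term $\bigl((v_{m}^{n+1})^{-}\bigr)^{2}/h_{n}$, which is not controlled by the induction hypothesis because $v_{m}^{n+1}$ lies outside the half-interval vector $V^{n+1}$. The paper keeps this term and absorbs it, which changes the final estimate to $\Bigl(1-2\tau_{n}\bigl(\tfrac{1}{h_{n}}+\tfrac{1}{2}(M_{n}/h_{n})^{2(q-1)}\bigr)\Bigr)\bigl\|(V^{n+1})^{-}\bigr\|_{*}^{2}\le\bigl\|(V^{n})^{-}\bigr\|_{*}^{2}$; the condition $\lambda<\frac{1}{16}$ together with the definitions of $\tau_{n}$ and $h_{n}$ still makes the prefactor positive, so the conclusion survives, but the inequality you state is not the one the argument actually delivers. (A remark in the other direction: the paper's proof only establishes $V^{n+1}\ge 0$, so your discrete strong-maximum-principle paragraph for the strict inequalities is an addition the paper itself does not carry out.)
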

\begin{proof}
	We will prove monotony by applying the similar argument as the nonnegativity to $v_{j}^{n}=u_{j+1}^{n}-u_{j}^{n}$ for $n \geq 0$ and $1\leq j \leq m-1$. Let 
	\begin{equation*}
	V^{n}=(v_{1}^{n},v_{2}^{n},...,v_{m-1}^{n})^{t}.
	\end{equation*}
	It is easy to see that for $j=1,...,m-1,\ v_{j}^{n}$ satisfies
	\begin{align}
	 \label{afef}
	\dfrac{v_{j}^{n+1}-v_{j}^{n}}{\tau_{n}}&=\dfrac{v_{j+1}^{n+1}-2v_{j}^{n+1}+v_{j-1}^{n+1}}{h_{n}^{2}}+(u_{j+1}^{n})^{p}-(u_{j}^{n})^{p}\\
	 &-\dfrac{1}{(2h_{n})^{q}}\left(\left|v_{j+1}^{n}-v_{j}^{n}\right|^{q-1}\left|v_{j+1}^{n+1}-v_{j}^{n+1}\right|-\left|v_{j}^{n}-v_{j-1}^{n}\right|^{q-1}\left|v_{j}^{n+1}-v_{j-1}^{n+1}\right|\right).\nonumber
	\end{align}
	In view of assumption \textbf{(A3)}, we see that $V^{n}\geq 0$ holds for $n=0$. Supposing that it holds for some $n\geq 0$, we have to show that $V^{n+1}\geq 0$. We use that
	\begin{equation*}
	(u_{j+1}^{n})^{p}-(u_{j}^{n})^{p}>0 \text{ and } \dfrac{1}{(2h_{n})^{q}}\left(\left|v_{j}^{n}-v_{j-1}^{n}\right|^{q-1}\left|v_{j}^{n+1}-v_{j-1}^{n+1}\right|\right)>0.
	\end{equation*} 
	We multiply equation \eqref{afef} by $(v_{j}^{n+1})^{-}$ and we sum for $j=1,...,m-1$, we obtain
	\begin{eqnarray*}
		&& \sum_{j=1}^{m-1}{\frac{(v_{j}^{n+1})^{-}-(v_{j}^{n})^{-}}{\tau_{n}}(v_{j}^{n+1})^{-}}
		+\frac{1}{h_{n}^{2}}\sum_{j=1}^{m-1}{(v_{j+1}^{n+1})^{+}(v_{j}^{n+1})^{-}}\\
		&& \ \ \ \ -\frac{1}{h_{n}}\sum_{j=1}^{m-1}{{\frac{(v_{j+1}^{n+1})^{-}-(v_{j}^{n+1})^{-}}{h_{n}}(v_{j}^{n+1})^{-}}}+\frac{1}{h_{n}^{2}}\sum_{j=1}^{m-1}{(v_{j-1}^{n+1})^{+}(v_{j}^{n+1})^{-}}\\
		&& \ \ \ \ +\frac{1}{h_{n}}\sum_{j=1}^{m-1}{\frac{(v_{j}^{n+1})^{-}-(v_{j-1}^{n+1})^{-}}{h_{n}}(v_{j}^{n+1})^{-}}
		- \sum_{j=1}^{m-1}\left|\frac{v_{j+1}^{n}-v_{j}^{n}}{2h_{n}}\right|^{q-1}\left|\frac{v_{j+1}^{n+1}-v_{j}^{n+1}}{2h_{n}}\right|(v_{j}^{n+1})^{-}\\
		&=&-\sum_{j=1}^{m-1}(v_{j}^{n})^{+}(v_{j}^{n+1})^{-}-\sum_{j=1}^{m-1}((u_{j+1}^{n})^{p}-(u_{j}^{n})^{p})(v_{j}^{n+1})^{-}\\
		&&\ \ \ \ -\sum_{j=1}^{m-1}\left|\frac{v_{j+1}^{n}-v_{j}^{n}}{2h_{n}}\right|^{q-1}\left|\frac{v_{j+1}^{n+1}-v_{j}^{n+1}}{2h_{n}}\right|(v_{j}^{n+1})^{-}\\
		&\leq & 0,
	\end{eqnarray*}
	so
	\begin{eqnarray*}
		&& \sum_{j=1}^{m-1}{\frac{(v_{j}^{n+1})^{-}-(v_{j}^{n})^{-}}{\tau_{n}}(v_{j}^{n+1})^{-}}
		+\frac{1}{h_{n}^{2}}\sum_{j=1}^{m-1}{(v_{j+1}^{n+1})^{+}(v_{j}^{n+1})^{-}}\\
		&&\ \ \ \ -\frac{1}{h_{n}}\sum_{j=1}^{m-1}{{\frac{(v_{j+1}^{n+1})^{-}-(v_{j}^{n+1})^{-}}{h_{n}}(v_{j}^{n+1})^{-}}}+\frac{1}{h_{n}^{2}}\sum_{j=1}^{m-1}{(v_{j-1}^{n+1})^{+}(v_{j}^{n+1})^{-}}\\
		&& \ \ \ \ +\frac{1}{h_{n}}\sum_{j=1}^{m-1}{\frac{(v_{j}^{n+1})^{-}-(v_{j-1}^{n+1})^{-}}{h_{n}}(v_{j}^{n+1})^{-}} \\
		& \leq & \dfrac{1}{2}\left(\frac{M_{n}}{h_{n}}\right)^{q-1}\sum_{j=1}^{m-1}\left|\frac{v_{j+1}^{n+1}-v_{j}^{n+1}}{h_{n}}\right|(v_{j}^{n+1})^{-}.
	\end{eqnarray*}
	We use the same calculations as the proof of lemma \ref{positiviteexacte}, we see that
	\begin{eqnarray*}
		& \displaystyle\sum\limits_{j=1}^{m-1}{\dfrac{(v_{j}^{n+1})^{-}-(v_{j}^{n})^{-}}{\tau_{n}}(v_{j}^{n+1})^{-}}
		+\sum\limits_{j=1}^{m-1}{\left(\dfrac{(v_{j+1}^{n+1})^{-}-(v_{j}^{n+1})^{-}}{h_{n}}\right)^{2}}\\
		& \leq \dfrac{1}{2}\left(\dfrac{M_{n}}{h_{n}}\right)^{q-1}\displaystyle\sum\limits_{j=1}^{m-1}\left|\dfrac{(v_{j+1}^{n+1})^{-}-(v_{j}^{n+1})^{-}}{h_{n}}\right|(v_{j}^{n+1})^{-}
		+\dfrac{((v_{m}^{n+1})^{-})^{2}}{h_{n}}.
	\end{eqnarray*} 
	We denote by $\left\|X\right\|_{*}:=\sum\limits_{j=1}^{m-1}(X_{j})^{2}$ and for all $\epsilon >0$ we have
	\begin{eqnarray*}
		&& \sum_{j=1}^{m-1}{\frac{(v_{j}^{n+1})^{-}-(v_{j}^{n})^{-}}{\tau_{n}}(v_{j}^{n+1})^{-}}
		+\left\|D(V^{n+1})^{-}\right\|_{*}^{2}\\
		& \leq& \epsilon \left\|D(V^{n+1})^{-}\right\|_{*}^{2} 
		+\frac{1}{4\epsilon}\left(\frac{M_{n}}{h_{n}}\right)^{2(q-1)}\left\|(V^{n+1})^{-}\right\|_{*}^{2}
		+\frac{1}{h_{n}}\left\|(V^{n+1})^{-}\right\|_{*}^{2}.
	\end{eqnarray*} 
	If we take $\epsilon =\dfrac{1}{2}$ we obtain
	\begin{eqnarray*}
		&& \sum_{j=1}^{m-1}{\frac{(v_{j}^{n+1})^{-}-(v_{j}^{n})^{-}}{\tau_{n}}(v_{j}^{n+1})^{-}}
		+\left\|D(V^{n+1})^{-}\right\|_{*}^{2}\\
		& \leq& \frac{1}{2} \left\|D(V^{n+1})^{-}\right\|_{*}^{2} 
		+\frac{1}{2}\left(\frac{M_{n}}{h_{n}}\right)^{2(q-1)}\left\|(V^{n+1})^{-}\right\|_{*}^{2}
		+\frac{1}{h_{n}}\left\|(V^{n+1})^{-}\right\|_{*}^{2}.
	\end{eqnarray*}
	Then we can deduce that
	\begin{equation*}
	\sum_{j=1}^{m-1}{((v_{j}^{n+1})^{-}-(v_{j}^{n})^{-})(v_{j}^{n+1})^{-}}
	\leq \tau_{n}\left(\frac{1}{h_{n}}+\frac{1}{2}\left(\frac{M_{n}}{h_{n}}\right)^{2(q-1)}\right)\left\|(V^{n+1})^{-}\right\|_{*}^{2},
	\end{equation*}
	which implies that
	\begin{equation*} \left\|(V^{n+1})^{-}\right\|_{*}^{2}-\left\|(V^{n})^{-}\right\|_{*}^{2}+\sum_{j=1}^{m-1}((v_{j}^{n+1})^{-}-(v_{j}^{n})^{-})^{2} \leq 2\tau_{n}\left(\frac{1}{h_{n}}+\frac{1}{2}\left(\frac{M_{n}}{h_{n}}\right)^{2(q-1)}\right)\left\|(V^{n+1})^{-}\right\|_{*}^{2}.
	\end{equation*}
	And hence we get
	\begin{equation*}
	\left(1-2\tau_{n}\left(\frac{1}{h_{n}}+\frac{1}{2}\left(\frac{M_{n}}{h_{n}}\right)^{2(q-1)}\right)\right)\left\|(V^{n+1})^{-}\right\|_{*}^{2}\leq \left\|(V^{n})^{-}\right\|_{*}^{2}.
	\end{equation*}
	We use that $\lambda =\dfrac{\tau}{h^{2}}<\dfrac{1}{16}$ we can verify that $$1-2\tau_{n}\left(\dfrac{1}{h_{n}}+\dfrac{1}{2}\left(\dfrac{M_{n}}{h_{n}}\right)^{2(q-1)}\right)>0.$$ 
	And finally by $V^{n}\geq 0$, we can deduce that $(V^{n+1})^{-}=0$, this gives $V^{n+1}\geq 0.$\\
	We do the same thing to obtain that $v_{j}^{n}=u_{j}^{n}-u_{j+1}^{n}\geq 0$ for $n\geq 0$ and $m\leq j\leq N_{n}.$
\end{proof}
\subsection{Symmetry:}
The last property of the difference solution is the symmetry, analogue of lemma \ref{symetrieexacte}.
\begin{lem}
	Under the assumption \textbf{(A1)-(A5)}, for $U^{n}$ solution of \eqref{approchee} we have
	\begin{equation*}
	u_{m-i}^{n}=u_{m+i}^{n} \text{ for all } i=1,...,m-1 \text{ and } n\geq 0.
	\end{equation*}
	\label{symetrienum}
\end{lem}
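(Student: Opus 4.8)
The plan is to prove symmetry by the same device used in the continuous case (Lemma \ref{symetrieexacte}): I will build a ``reflected'' sequence, show it solves the same discrete problem, and then invoke uniqueness. Concretely, set $w_{j}^{n}:=u_{N_{n}+1-j}^{n}$ for $j=0,\dots,N_{n}+1$, so that $w^{n}$ is the spatial mirror image of $u^{n}$ about the midpoint $x_{m}=0$. The goal is to show that $W^{n}:=(w_{0}^{n},\dots,w_{N_{n}+1}^{n})^{t}$ satisfies the scheme \eqref{approchee} with the same symmetric initial data, and then conclude $W^{n}=U^{n}$ for all $n$ by the uniqueness asserted in the Remark, which yields $u_{N_{n}+1-j}^{n}=u_{j}^{n}$, i.e. $u_{m-i}^{n}=u_{m+i}^{n}$.

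First I would verify the base case: by \textbf{(A2)} the initial data $u_{0}$ is symmetric about $x=0$, and since the net points are symmetric about $x_{m}$ (we have $N_{n}+1=2m$, so $x_{N_{n}+1-j}=-x_{j}$), we get $w_{j}^{0}=u_{0}(x_{N_{n}+1-j})=u_{0}(-x_{j})=u_{0}(x_{j})=u_{j}^{0}$; the boundary conditions $w_{0}^{n}=u_{N_{n}+1}^{n}=0$ and $w_{N_{n}+1}^{n}=u_{0}^{n}=0$ hold as well. Next, I would substitute $w^{n}$ into the difference equation. Reindexing $j\mapsto N_{n}+1-j$ in \eqref{approchee}, the discrete Laplacian $\frac{u_{j+1}^{n+1}-2u_{j}^{n+1}+u_{j-1}^{n+1}}{h_{n}^{2}}$ is manifestly invariant under reflection because it is symmetric in $u_{j+1}$ and $u_{j-1}$, and the reaction term $(u_{j}^{n})^{p}$ transforms correctly. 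The only point needing care is the gradient term $\frac{1}{(2h_{n})^{q}}|u_{j+1}^{n}-u_{j-1}^{n}|^{q-1}|u_{j+1}^{n+1}-u_{j-1}^{n+1}|$: under the reflection the difference $u_{j+1}-u_{j-1}$ becomes $w_{j+1}-w_{j-1}=u_{N_{n}-j}-u_{N_{n}+2-j}=-(u_{(N_{n}+1-j)+1}-u_{(N_{n}+1-j)-1})$, i.e. it changes sign, but since the term appears only through absolute values $|\cdot|$, the sign is immaterial and the term is preserved. Hence $W^{n}$ satisfies \eqref{approchee} exactly.

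The main obstacle, and the step I would treat most carefully, is justifying that uniqueness actually applies, since the mesh parameters $\tau_{n}$ and $h_{n}$ (and thus $N_{n}$) depend on $\|u^{n}\|_{\infty}$; I must check that the reflected sequence generates the same mesh. This is immediate because $\|w^{n}\|_{\infty}=\|u^{n}\|_{\infty}$ (reflection is a permutation of the components), so $\tau_{n}$, $h_{n}$ and $N_{n}$ are identical for both sequences at every step, and the implicit system solved at step $n+1$ is genuinely the same. Granting uniqueness of the solution of \eqref{approchee} (established in the proof here by the same coercivity/diagonal-dominance argument used for positivity, using $\lambda<\frac{1}{16}$ to make the implicit operator invertible), an induction on $n$ closes the argument: if $W^{n}=U^{n}$ then both $W^{n+1}$ and $U^{n+1}$ solve the same uniquely solvable system with the same data, forcing $W^{n+1}=U^{n+1}$. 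This gives the claimed symmetry for all $n\geq 0$.
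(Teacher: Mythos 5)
Your reflection device itself is sound: defining $w_{j}^{n}=u_{N_{n}+1-j}^{n}$, checking that the scheme \eqref{approchee} is invariant under this reflection (because the gradient term only enters through absolute values), and checking that the mesh parameters agree (since $\left\|W^{n}\right\|_{\infty}=\left\|U^{n}\right\|_{\infty}$) are all correct, and this is the same reflection-plus-uniqueness device the paper ultimately uses. The genuine gap is the uniqueness of the solution of the one-step system, which you invoke but never prove --- and which is in fact the entire substance of the paper's proof. Because of the term $\frac{1}{(2h_{n})^{q}}\left|u_{j+1}^{n}-u_{j-1}^{n}\right|^{q-1}\left|u_{j+1}^{n+1}-u_{j-1}^{n+1}\right|$, the system determining $U^{n+1}$ from $U^{n}$ is \emph{nonlinear}, so uniqueness is not a generic linear-algebra fact. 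Your two pointers do not supply it: the Remark you cite says precisely that uniqueness ``is given in the proof of lemma \ref{symetrienum}'', i.e.\ it defers to the very proof you are writing, so quoting it is circular; and the discrete positivity lemma contains an energy argument (sign decomposition plus Young's inequality), not any coercivity or diagonal-dominance statement about the implicit operator. Diagonal dominance appears in the paper only \emph{after} the step has been linearized: using the already-proved positivity and monotony of $U^{n+1}$ together with the inductive symmetry \eqref{eq22} of $U^{n}$, the paper fixes the sign of $u_{j+1}^{n+1}-u_{j-1}^{n+1}$ on each side of $m$ (and makes the term vanish at $j=m$), rewrites the step as the genuinely linear tridiagonal system \eqref{inversible}, proves strict diagonal dominance from $\alpha_{i}^{n}\leq\lambda_{n}$ (a consequence of \eqref{hypothesepositive}), and only then gets uniqueness via Hadamard's lemma.

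Your plan is repairable, but you must actually prove nonlinear uniqueness, for instance by a contraction estimate that needs neither monotony nor symmetry. If $U$ and $\tilde{U}$ both solve the step \eqref{matricedesymetrie} with the same data $U^{n}$, set $e_{j}=u_{j}-\tilde{u}_{j}$ (with $e_{0}=e_{N_{n}+1}=0$), subtract the two equations and use $\bigl|\,|a|-|b|\,\bigr|\leq|a-b|$ on the gradient terms; at an index $j_{0}$ where $|e_{j}|$ is maximal this gives
\begin{equation*}
(1+2\lambda_{n})\,|e_{j_{0}}|\;\leq\;2\lambda_{n}|e_{j_{0}}|+2\alpha_{j_{0}}^{n}|e_{j_{0}}|,
\end{equation*}
hence $(1-2\alpha_{j_{0}}^{n})|e_{j_{0}}|\leq 0$. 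Since $\alpha_{j}^{n}\leq\lambda_{n}$ (by \eqref{hypothesepositive}) and the mesh definitions together with $\lambda<\frac{1}{16}$ keep $\lambda_{n}<\frac{1}{2}$ (the same kind of smallness the paper verifies in its positivity and monotony proofs), this forces $e\equiv 0$. With this estimate inserted, your induction closes, and the resulting proof is arguably cleaner than the paper's, since uniqueness is obtained for the nonlinear step directly rather than through the positivity--monotony--symmetry linearization. As written, however, the proposal assumes exactly the ingredient the lemma is responsible for establishing.
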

\begin{proof}
	For $n\geq 0$, let $\lambda_{n}:=\dfrac{\tau_{n}}{h_{n}^{2}}$. For $j=1,...,N_{n}$ the first equation of \eqref{approchee} can be rewritten as
	\begin{equation}
	-\lambda_{n}u_{j-1}^{n+1}+(1+2\lambda_{n})u_{j}^{n+1}-\lambda_{n}u_{j+1}^{n+1}+\frac{\tau_{n}}{(2h_{n})^{q}}\left|u_{j+1}^{n}-u_{j-1}^{n}\right|^{q-1}\left|u_{j+1}^{n+1}-u_{j-1}^{n+1}\right|=u_{j}^{n}+\tau_{n}(u_{j}^{n})^{p}.
	\label{matricedesymetrie}
	\end{equation}
	In view of the assumption \textbf{(A2)}, we see that $u_{m-i}^{0}=u_{m+i}^{0}$ for all $i=1,...,m-1$. Supposing that it holds for some $n\geq 0$, then for $i=1,...,m-1,$
	\begin{eqnarray}
	u_{m-i}^{n}&=&u_{m+i}^{n}.
	\label{eq22}
	\end{eqnarray}
	We have to show that $u_{m-i}^{n+1}=u_{m+i}^{n+1}$ for $i=1,...,m-1$.\\
	We use \eqref{matricedesymetrie}, \eqref{eq22}, positivity and monotony we get the system $(S)$
			\begin{equation*} 
			\left\lbrace
			\begin{array}{lcl} 
			\ \ \ \ \ \ \ \ \ \ \ \ \ \ \ \ \ \ \ \ \ \ \ \ \ \ \ \ \ (1+2\lambda_{n})u_{1}^{n+1}\ \ +\left(-\lambda_{n}+\alpha_{1}^{n}\right)u_{2}^{n+1}&=& u_{1}^{n}+\tau_{n}(u_{1}^{n})^{p}.\\ 
			\left(-\lambda_{n}-\alpha_{i}^{n}\right)u_{i-1}^{n+1}\ \ \ \ +(1+2\lambda_{n})u_{i}^{n+1}\ \ +	\left(-\lambda_{n}+\alpha_{i}^{n}\right)u_{i+1}^{n+1}&=& u_{i}^{n}+\tau_{n}(u_{i}^{n})^{p}, \  i=2,...,m-1,\\
			-\lambda_{n}u_{m-1}^{n+1} \ \ \ \ \ \ \ \ \ \ \ \ \  +(1+2\lambda_{n})u_{m}^{n+1}\ \ -\lambda_{n}u_{m+1}^{n+1}&=& u_{m}^{n}+\tau_{n}(u_{m}^{n})^{p}.\\ 
			\left(-\lambda_{n}+\alpha_{i}^{n}\right)u_{i-1}^{n+1}\ \ \ \ +(1+2\lambda_{n})u_{i}^{n+1}\ \ +	\left(-\lambda_{n}-\alpha_{i}^{n}\right)u_{i+1}^{n+1}&=& u_{i}^{n}+\tau_{n}(u_{i}^{n})^{p},\ i=m+1,...,N_{n}-1,\\
			\left(-\lambda_{n}+\alpha_{N_{n}}^{n}\right)u_{N_{n}-1}^{n+1}+(1+2\lambda_{n})u_{N_{n}}^{n+1}&=& u_{N_{n}}^{n}+\tau_{n}(u_{N_{n}}^{n})^{p}.\\	
			\end{array}
			\right. 
			\end{equation*}
where $\alpha^{n}$ such that $\alpha_{i}^{n}=\dfrac{\tau_{n}}{(2h_{n})^{q}}\left|u_{i+1}^{n}-u_{i-1}^{n}\right|^{q-1}, \ \text{for } i=1,...,N_{n}$ which is symmetric because of \eqref{eq22}.\\ 
Using positivity, monotony and \eqref{eq22}, then $(S)$ can be rewritten as	
\begin{equation}
QU^{n+1}=V^{n},
\label{inversible}
\end{equation}
	where 	$V^{n}$ is a symmetric vector defined by
	$V^{n}=(v_{1}^{n},...,v_{m-1}^{n},v_{m}^{n},v_{m-1}^{n},...,v_{1}^{n})^{t}$
	with 
	\begin{equation*}
	v_{i}^{n}=u_{i}^{n}+\tau_{n}(u_{i}^{n})^{p} \text{ for } 1\leq i\leq m.
	\end{equation*}
And $Q$ is an $N_{n}\times N_{n}$ tridiagonal matrix such that $Q=(q_{ij})_{i,j}$ with
	\begin{equation*}
	\left. 
	\begin{array}{lll}
	q_{ij}&=0\ \ for \ \ \left|i-j\right|>1,\\
	q_{ii}&=1+2\lambda_{n}\ \ for \ \ i=1,...,N_{n},\\
	q_{i+1,i}&=\left\lbrace  \begin{array}{l}
	-\lambda_{n}-\alpha_{i+1}^{n}\ \ for \ \ i=1,...,m-1,\\
		-\lambda_{n}+\alpha_{i+1}^{n}\ \ for \ \ i=m,...,N_{n}-1,\\
			\end{array}
			\right.\\
	q_{i,i+1}&=\left\lbrace \begin{array}{l}
			-\lambda_{n}+\alpha_{i}^{n}\ \ for \ \ i=1,...,m,\\
			-\lambda_{n}-\alpha_{i}^{n}\ \ for \ \ i=m+1,...,N_{n}-1,\\
				\end{array}
				\right. 
		\end{array}
		\right. 
	\end{equation*}
	We show that $Q$ is a strictly diagonal-dominant real matrix, in fact :\\
	For all $i=1,...,N_{n}$ we have 
	\begin{equation*}
	\left|q_{ii}\right|=1+2\lambda_{n}\ \ \text{ and} \ \ \ \sum\limits_{i\neq j}\left|q_{ij}\right|=\left|-\lambda_{n}+\alpha_{i}^{n}\right|+\lambda_{n}+\alpha_{i}^{n}
	\end{equation*}
	Using \eqref{hypothesepositive}, we get $\dfrac{\tau_{n}M_{n}^{q-1}}{2h_{n}^{q}}\leq \dfrac{\tau_{n}}{h_{n}^{2}}= \lambda_{n}$, and then
			\begin{equation*}
			\alpha_{i}^{n}\leq \dfrac{\tau_{n}M_{n}^{q-1}}{2h_{n}^{q}}\leq \lambda_{n},
			\end{equation*}	
			which implies that
			\begin{equation*}
			\sum\limits_{i\neq j}\left|q_{ij}\right|=2\lambda_{n}<1+2\lambda_{n}=\left|q_{ii}\right|.
			\end{equation*}
Using the Hadamard lemma we deduce that $Q$ is invertible. Hence \eqref{inversible} has a unique solution.\\
	Now, it is easy to see that if
	\begin{align*}
	U^{n+1}=\left(
	\begin{array}{ccccccccccc}
	u_{1}^{n+1} &\cdots u_{m-1}^{n+1} &u_{m}^{n+1}&u_{m+1}^{n+1} &\cdots &u_{N_{n}}^{n+1} 
	\end{array}
	\right)^{t}
	\end{align*}
	is a solution of \eqref{inversible} and using the symmetry of $V^{n}$ we obtain that
	\begin{align*}
	W^{n+1}=
	\left(
	\begin{array}{ccccccccccc}
	u_{1}^{n+1} &\cdots u_{m-1}^{n+1} &u_{m}^{n+1}&u_{m-1}^{n+1} &\cdots &u_{1}^{n+1} 
	\end{array}
	\right)^{t}
	\end{align*} 
	is also a solution of \eqref{inversible}. By uniqueness, we deduce that $U^{n+1}=W^{n+1}$ which achieves the proof of symmetry.
\end{proof}
\section{Blow up theorem }
In this section we will prove that the solution of the numerical problem blows up for all $p>1$ and $1\leq q \leq \frac{2p}{p+1}.$
\begin{th1}
	We suppose that the initial data satisfies \textbf{(A1)-(A5)}, then the solution of \eqref{approchee} blows up and we have 
	\begin{equation*}
	\lim_{n\rightarrow +\infty} u_{m}^{n}=+\infty.
	\end{equation*}
	\label{explosionnumerique}
\end{th1}
To prove the theorem, we need the next lemma:
\begin{lem}
	For a large initial data, we have $u_{m}^{n}>> 1$ for all $n\geq 0$. Moreover we have, $u_{m}^{n+1}\geq u_{m}^{n}$  for all $n\geq 0.$
	\label{lemcroissance}
\end{lem}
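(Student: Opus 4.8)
The plan is to prove both assertions at once by induction on $n$, treating the monotonicity $u_m^{n+1}\ge u_m^n$ as the substantive statement: once it holds, the largeness is immediate, since by positivity, monotony and symmetry of the initial data one has $u_m^0=u_0(x_m)=\left\|u_0\right\|_\infty$, which is $\gg 1$ by \textbf{(A4)}, and a non-decreasing sequence then keeps every $u_m^n$ above that value. Thus the largeness plays a double role, as the standing induction hypothesis and as part of the conclusion. Concretely I would fix the threshold $K:=(2/h^2)^{1/(p-1)}$ (note $K\ge 1$ for $h$ small) and read \textbf{(A4)} as $\left\|u_0\right\|_\infty\ge\max(1,K)$, so that the base case $u_m^0\ge\max(1,K)$ is guaranteed.

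First I would isolate the scalar equation governing the peak value. By the symmetry Lemma \ref{symetrienum} one has $u_{m+1}^n=u_{m-1}^n$ and $u_{m+1}^{n+1}=u_{m-1}^{n+1}$, so the discrete gradient term at $j=m$ vanishes and the central row of the system $(S)$ collapses to
\begin{equation*}
(1+2\lambda_n)u_m^{n+1}-2\lambda_n u_{m-1}^{n+1}=u_m^n+\tau_n (u_m^n)^p .
\end{equation*}
Rearranging this yields the identity
\begin{equation*}
u_m^{n+1}-u_m^n=\frac{\tau_n (u_m^n)^p-2\lambda_n\bigl(u_m^n-u_{m-1}^{n+1}\bigr)}{1+2\lambda_n}.
\end{equation*}
Since the scheme is positivity preserving, $U^{n+1}\ge 0$, hence $u_m^n-u_{m-1}^{n+1}\le u_m^n$, and using $2\lambda_n/\tau_n=2/h_n^2$ I obtain
\begin{equation*}
u_m^{n+1}-u_m^n\ge\frac{\tau_n\, u_m^n\bigl((u_m^n)^{p-1}-2/h_n^2\bigr)}{1+2\lambda_n}.
\end{equation*}
It therefore suffices to establish the key inequality $(u_m^n)^{p-1}h_n^2\ge 2$.

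To verify $(u_m^n)^{p-1}h_n^2\ge 2$ I would distinguish the two branches in $h_n=\min\bigl(h,(2M_n^{1-q})^{1/(2-q)}\bigr)$, where $M_n=\left\|u^n\right\|_\infty=u_m^n$. If the adaptive argument is active, $h_n=(2M_n^{1-q})^{1/(2-q)}$, a direct substitution gives $(u_m^n)^{p-1}h_n^2=2^{2/(2-q)}M_n^{(2p-q(p+1))/(2-q)}$; because $1\le q\le\frac{2p}{p+1}$ one has $2p-q(p+1)\ge 0$ and $2/(2-q)\ge 1$, so with $M_n\ge 1$ the right-hand side is at least $2^{2/(2-q)}\ge 2$. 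If instead $h_n=h$, then $(u_m^n)^{p-1}h_n^2=(u_m^n)^{p-1}h^2\ge K^{p-1}h^2=2$ by the induction hypothesis $u_m^n\ge K$. In both cases $(u_m^n)^{p-1}h_n^2\ge 2$, whence $u_m^{n+1}\ge u_m^n\ge\max(1,K)$ and the induction closes, proving simultaneously the monotonicity and $u_m^n\gg 1$.

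The hard part is this last step: the case split on the adaptive mesh $h_n$ together with the exponent bookkeeping $2p-q(p+1)\ge 0$, which is precisely where the structural hypothesis $q\le\frac{2p}{p+1}$ is consumed. The flat-mesh branch $h_n=h$ is what forces the largeness assumption \textbf{(A4)}, since there the explicit reaction must dominate the negative (implicit) diffusion feedback at the peak, and this can only be guaranteed when $u_m^n$ already exceeds the fixed threshold $K$. The crude lower bound $u_{m-1}^{n+1}\ge 0$ is the device that keeps the whole argument scalar and spares me from having to control the implicit neighbour value $u_{m-1}^{n+1}$ exactly.
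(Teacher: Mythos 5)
Your proof is correct, and its backbone coincides with the paper's: induction on $n$, the scalar equation at the peak obtained by taking $j=m$ in \eqref{approchee} and using symmetry to cancel the discrete gradient term, positivity to discard the neighbour value $u_{m-1}^{n+1}$, and the exponent bookkeeping $2p-q(p+1)\geq 0$, which is where $q\leq\frac{2p}{p+1}$ enters. Your key sufficient condition $(u_m^n)^{p-1}h_n^2\geq 2$ is equivalent to the paper's claim that the amplification factor in \eqref{eq41}, namely $\bigl(1+\tau_n(u_m^n)^{p-1}\bigr)/\bigl(1+2\lambda_n\bigr)$, exceeds $1$; the paper verifies this multiplicatively, you verify it additively. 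The genuine difference is your case split on the two branches of $h_n$. The paper substitutes the adaptive formulas $\tau_n=\tau(u_m^n)^{1-p}$ and $h_n=\bigl(2(u_m^n)^{1-q}\bigr)^{1/(2-q)}$ outright, which tacitly assumes the adaptive space-mesh branch is active; for $q=1$, where $\bigl(2(u_m^n)^{1-q}\bigr)^{1/(2-q)}=2>h$ so that $h_n=h$ at every step, and more generally whenever $u_m^n$ lies below the threshold $(2/h^{2-q})^{1/(q-1)}$, this is not the case. Your flat-mesh branch, with the explicit threshold $K=(2/h^2)^{1/(p-1)}$ giving a quantitative reading of \textbf{(A4)}, covers exactly these situations, so your argument is in this respect more complete than the paper's. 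What you give up is the explicit growth factor \eqref{eq2}, $u_m^{n+1}\geq \frac{1+\tau}{1+\tau 2^{-q/(2-q)}(u_m^n)^{-r}}\,u_m^n$, which the lemma itself does not require but which the paper reuses in the proof of theorem \ref{explosionnumerique} to obtain geometric growth of $u_m^n$; with your version, that theorem would need an analogous quantitative lower bound extracted from your increment formula rather than mere monotonicity.
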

\begin{proof}
	For $n=0$, we have $u_{m}^{0}>>1$ because of \textbf{(A4)}. Supposing that it holds for some $n \geq 0$, we have to show that $u_{m}^{n+1}>> 1$. In the first equation of \eqref{approchee}, if we take $j=m$ and we use symmetry we obtain
	\begin{equation*}
	(1+2\lambda_{n})u_{m}^{n+1}=u_{m}^{n}+2\lambda_{n}u_{m-1}^{n+1}+\tau_{n}(u_{m}^{n})^{p},
	\end{equation*}
	where $\lambda_{n}:=\dfrac{\tau_{n}}{h_{n}^{2}}$, and then we have
	\begin{equation}
	u_{m}^{n+1} \geq \dfrac{1+\tau_{n}(u_{m}^{n})^{p-1}}{1+2\lambda_{n}}u_{m}^{n}.
	\label{eq41}
	\end{equation}
	Using the recurrence hypothesis we get
	\begin{equation*}
	1+\tau_{n}(u_{m}^{n})^{p-1}=1+\tau \ \  \text { and } \ \ \  \lambda_{n}=\dfrac{\tau}{2^{\frac{2}{2-q}}(u_{m}^{n})^{p-\frac{q}{2-q}}}.
	\end{equation*}
	Then 
	\begin{eqnarray*}
		\dfrac{1+\tau_{n}(u_{m}^{n})^{p-1}}{1+2\lambda_{n}}&=&\dfrac{1+\tau}{1+\tau 2^{1-\frac{2}{2-q}}(u_{m}^{n})^{-p+\frac{q}{2-q}}}\\
		&=&\dfrac{1+\tau}{1+\tau 2^{\frac{-q}{2-q}}(u_{m}^{n})^{\frac{-2p+q(p+1)}{2-q}}}.
	\end{eqnarray*}
	For $1\leq q \leq \dfrac{2p}{p+1}$, we have
	\begin{equation}
	-r:=\dfrac{-2p+q(p+1)}{2-q}\leq 0,
	\label{eq1}
	\end{equation}
	which implies that
	\begin{equation}
	u_{m}^{n+1}\geq \dfrac{1+\tau}{1+\tau(2^{\frac{-q}{2-q}}(u_{m}^{n})^{-r})}u_{m}^{n}.
	\label{eq2}
	\end{equation}
	Now we have to show that
	\begin{equation*}
	\dfrac{1+\tau}{1+\tau(2^{\frac{-q}{2-q}}(u_{m}^{n})^{-r})}\geq 1.
	\end{equation*}
	Using $u_{m}^{n}>>1$ and $r\geq 0$, we have
	\begin{equation*}
	2^{\frac{-q}{2-q}}(u_{m}^{n})^{-r}=\dfrac{1}{2^{\frac{q}{2-q}}(u_{m}^{n})^{r}} <1.
	\end{equation*}
	Then 
	\begin{equation*}
	1+\tau> 1+\tau 2^{\frac{-q}{2-q}}(u_{m}^{n})^{-r},
	\end{equation*}
	hence 
	\begin{equation}
	\dfrac{1+\tau}{1+\tau(2^{\frac{-q}{2-q}}(u_{m}^{n})^{-r})}> 1,
	\label{um}
	\end{equation}
	and then, by $u_{m}^{n}>>1$
	\begin{equation*}
	u_{m}^{n+1} \geq \dfrac{1+\tau}{1+\tau(2^{\frac{-q}{2-q}}(u_{m}^{n})^{-r})}u_{m}^{n}>>1.
	\end{equation*}
	Moreover, for all $n\geq 0$ we deduce from \eqref{eq2} and \eqref{um} that $u_{m}^{n+1}\geq u_{m}^{n}$ which proves lemma \ref{lemcroissance}.
\end{proof}
Now we can prove theorem \ref{explosionnumerique}.
\begin{proof}
	Using lemma \ref{lemcroissance}, \eqref{eq1} and \eqref{eq2}, we can write that:
	\begin{eqnarray*}
		u_{m}^{n+1}& \geq &\dfrac{1+\tau}{1+\tau 2^{\frac{-q}{2-q}}(u_{m}^{n})^{\frac{-2p+q(1+p)}{2-q}}}u_{m}^{n} \\
		& \geq & \dfrac{1+\tau}{1+\tau 2^{\frac{-q}{2-q}}(u_{m}^{0})^{\frac{-2p+q(1+p)}{2-q}}}u_{m}^{n}, 
	\end{eqnarray*}
	which implies by iterations
	\begin{equation*}
	u_{m}^{n}\geq \left(\dfrac{1+\tau}{1+\tau 2^{\frac{-q}{2-q}}(u_{m}^{0})^{\frac{-2p+q(1+p)}{2-q}}}\right)^{n}u_{m}^{0}.  
	\end{equation*}
	For a large initial data, we have 
	\begin{equation*}
	\dfrac{1+\tau}{1+\tau 2^{\frac{-q}{2-q}}(u_{m}^{0})^{\frac{-2p+q(1+p)}{2-q}}}>1,
	\end{equation*}
	this implies that $u_{m}^{n}\rightarrow +\infty$ as $n \rightarrow +\infty$, which achieve the proof of theorem \ref{explosionnumerique}.
\end{proof}
\section{Numerical simulation}
In this section, we present some numerical simulation (with Matlab) that illustrate our results.\\
As it is shown in Figure \ref{donneeinitiale}, we take $u_{0}(x)=10^{3}\sin(\frac{\pi}{2}(x+1))$, which satisfies the conditions \textbf{(A1)-(A5)}.
%%%%%%%%%%%%%%%%%%%%%%%%%%%%%%%%%%%%%%%%%%%%%%%%%%%%%%%%%%%%%%%%%%%%%%%%%%%%%%%%%%%%%%%%
\begin{figure}[H]
	\centering
	\includegraphics[width=2.8in,height=2.8in]{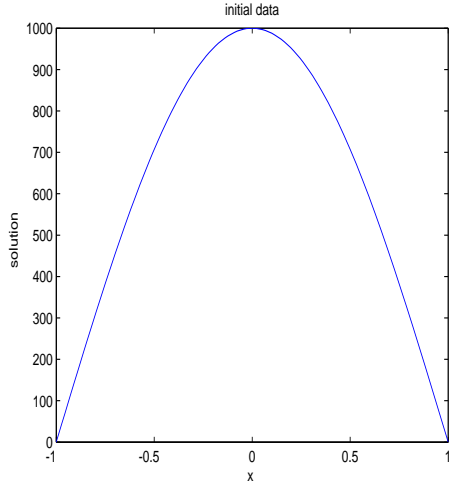}
	\caption{initial data: $u_{0}(x)=1000\sin\left(\dfrac{\pi}{2}(x+1)\right)$}
	\label{donneeinitiale}
\end{figure}
%%%%%%%%%%%%%%%%%%%%%%%%%%%%%%%%%%%%%%%%%%%%%%%%%%%%%%%%%%%%%%%%%%%%%%%%%%%%%%%%%%%%%%%
Next, we take $p=3$, $q=1.3 < \frac{2p}{p+1}$. Figures \ref{50iter}, \ref{200iter} and \ref{300iter} show the evolution of the numerical solution for different iterations. One can see that, numerically, the solution blows up in $x=0$.\\
\begin{figure}[H]
	\centering
	\includegraphics[width=3in,height=3in]{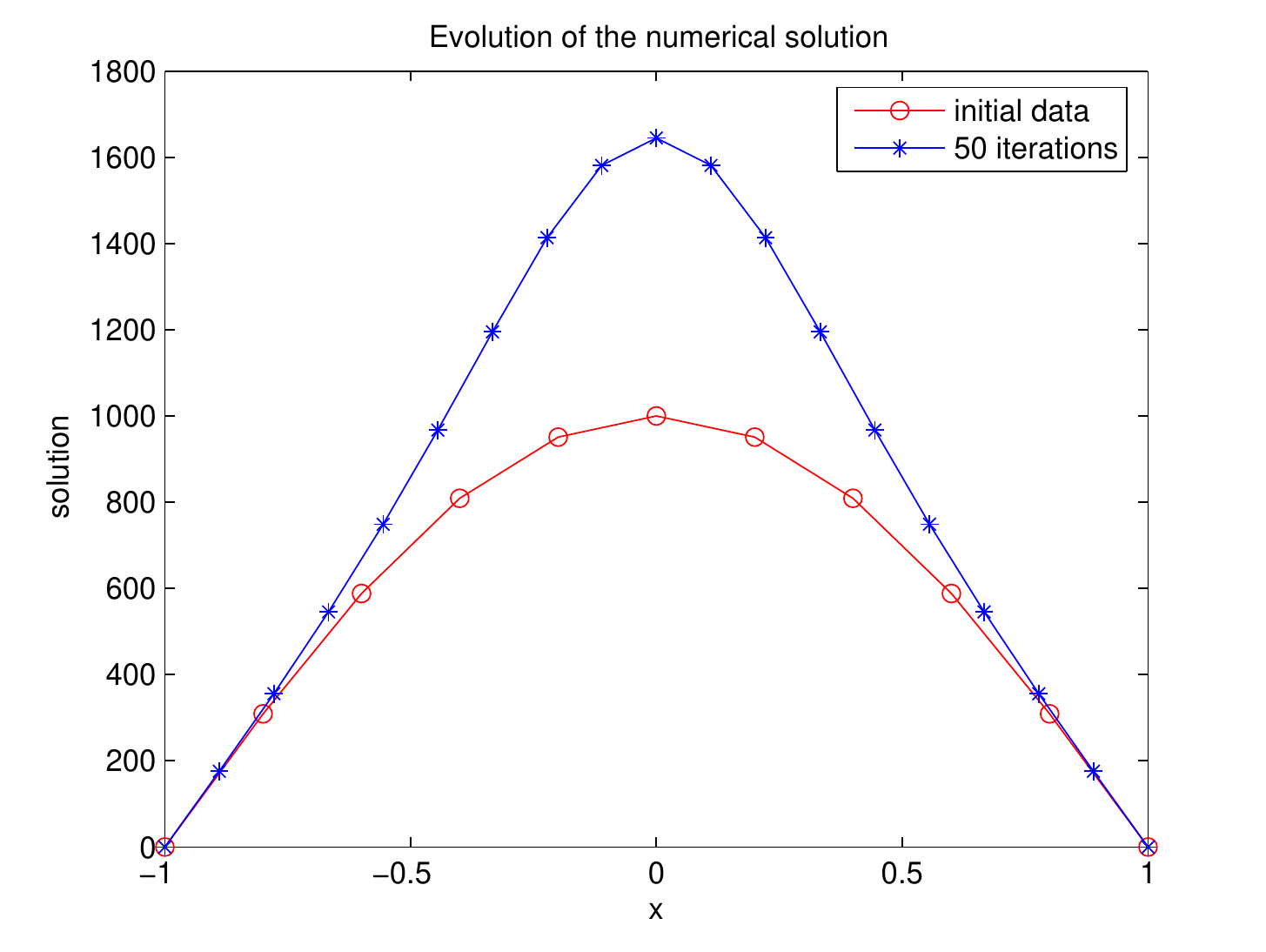}
	\caption{Evolution of the numerical solution in 50 iterations.}
	\label{50iter}
\end{figure}
%%%%%%%%%%%%%%%%%%%%%%%%%%%%%%%%%%%%%%%%%%%%%%%%%%%%%%%%%%%%%%%%%%%%%%%%%%%%%%%%%%
\begin{figure}[H]
	\begin{minipage}[b]{0.40\linewidth}
		\centering
		\includegraphics[width=3in,height=3in]{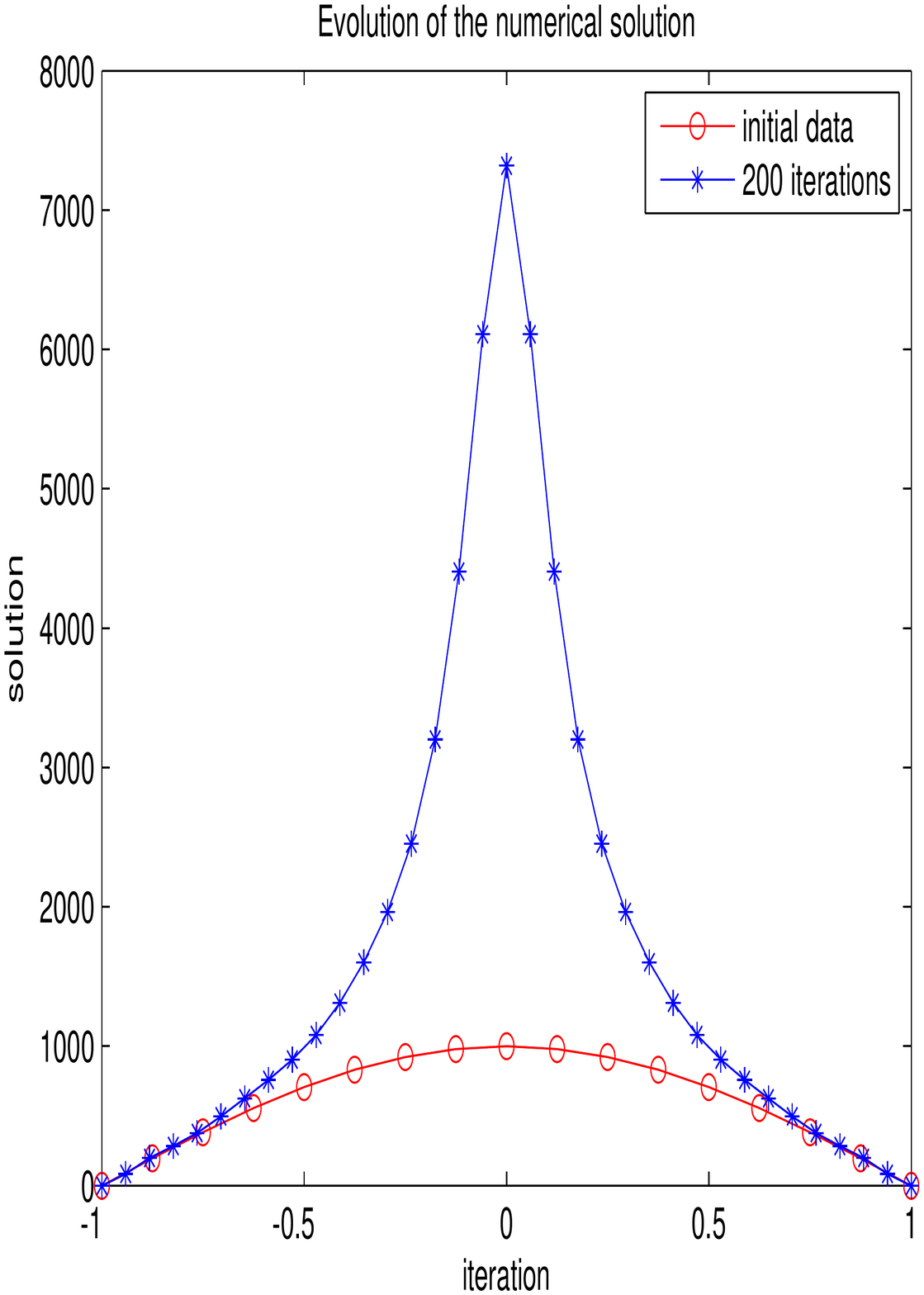}
		\caption{Evolution of the numerical solution in 200 iterations.}
		\label{200iter}
	\end{minipage}\hfill
	\begin{minipage}[b]{0.40\linewidth}
		\centering
		\includegraphics[width=3in,height=3in]{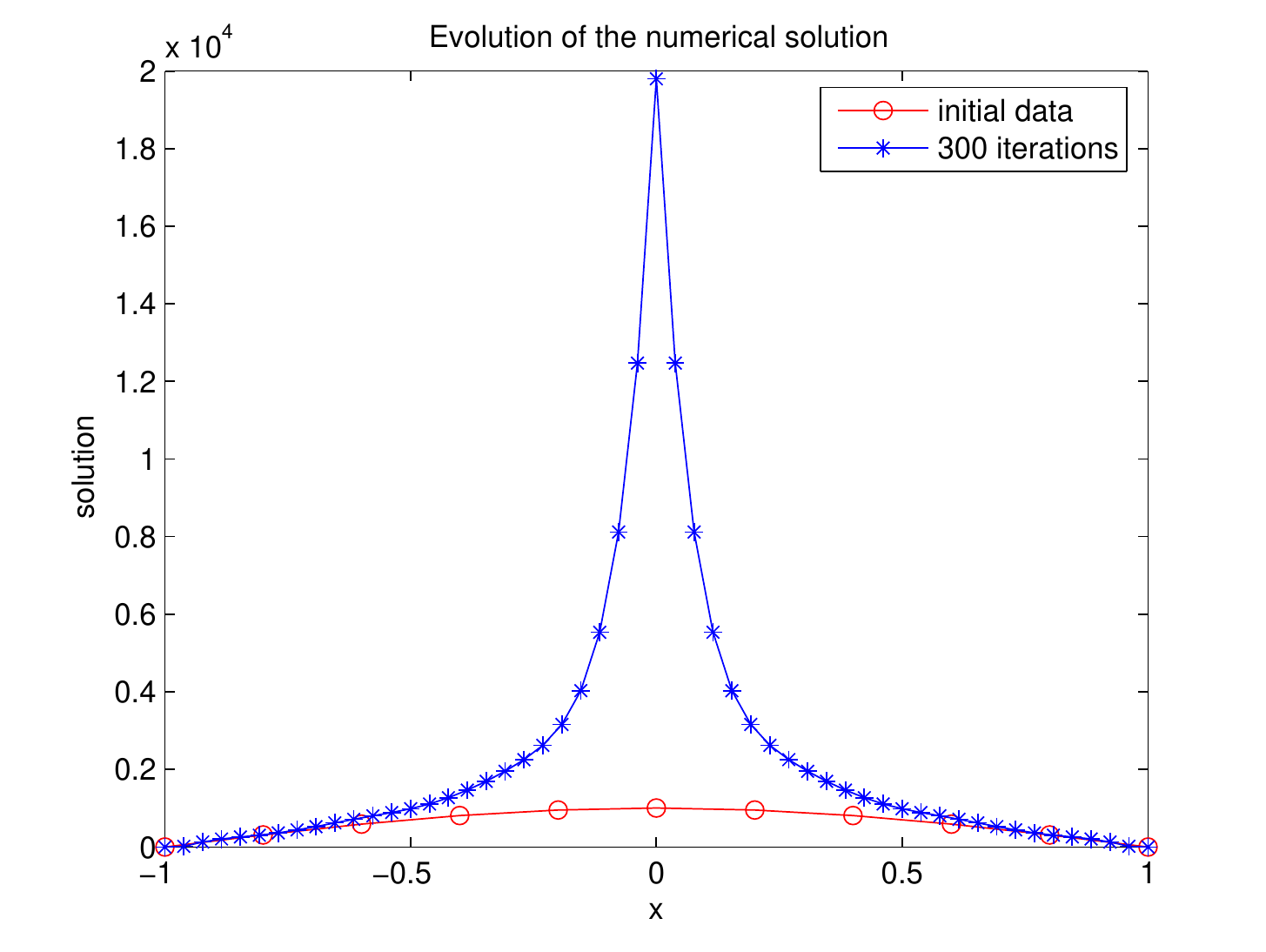}
		\caption{Evolution of the numerical solution in 300 iterations.}
		\label{300iter}
	\end{minipage}
\end{figure}
%%%%%%%%%%%%%%%%%%%%%%%%%%%%%%%%%%%%%%%%%%%%%%%%%%%%%%%%%%%%%%%%%%%%%%%%%%%%%%%%%%%%%%%%%%%
The growth of the solution leads to the reduction of $h_{n}$ and hence increasing the number of points of discretisation.
In Table \ref{tab}, we present some results about the decreasing of $h_{n}$ and the increasing of $N_{n}$ (the number of points of discretisation of the interval $[-1,1]$) in each iteration. Initially, simulation started with a discrete space step $h_{n}=0.138$, a discrete time step $\tau_{n}=10^{-4}$, a number of points of discretisation of the interval $[-1,1]$, $N_{n}=15$ and a maximum value $M_{n}=10^{3}$. After 350 iterations, we have an increase in the maximum value which leads to the decreasing of the discrete space step and discrete time step.
\begin{table}[H]
	\scriptsize
%	%\footenotesize
	\centering
	\begin{tabular}{|*{9}{c|}}
		\hline
		$Iteration$& $1$&$110$&$145$&$200$&$260$&$280$&$300$&$350 $\\ \hline
		$M_{n}$&$10^{3}$&$2.928.10^{3}$&$4.190.10^{3}$&$7.245.10^{3}$&$1.317.10^{4}$&$1.607.10^{4}$&$1.980.10^{4}$&$3.203.10^{4} $\\ \hline
		$\tau_{n}$&$10^{-4}$&$10^{-6}$&$5.10^{-7}$&$2.10^{-9}$&$2.10^{-10}$&$10^{-10}$&$4.10^{-11}$&$10^{-11}$ \\ \hline
		$h_{n}$&$0.13$&$0.87.10^{-1}$&$0.75.10^{-1}$&$0.59.10^{-1}$&$0.46.10^{-1}$&$0.42.10^{-1}$&$0.38.10^{-1}$&$0.31.10^{-1}$ \\ \hline
		$N_{n}$&$15$&$23$&$27$&$33$&$41$&$47$&$51$&$63$ \\ \hline
	\end{tabular}
%	%\end{center}
	\caption{Reduction of $h_{n},\ \tau_{n}$ and increasing of the number of points of discretisation.}
	\label{tab}
\end{table}
From Figure \ref{fmax}, we observe the evolution of the numerical maximum point (blow-up point) $x=0$. It gives an idea about the blow up rate given in theorem \ref{tauxexplosion}. In fact, in Figure \ref{ftauxexplosion}, we have plot the function $h$ representing the theoretical blow up rate given in theorem \ref{tauxexplosion}:
\begin{equation*}
h(t)=\dfrac{C}{(T^{*}_{num}-t)^{\frac{1}{p-1}}},\ t\geq0
\end{equation*}
where $T^{*}_{num}$ is the blow up time estimated in \cite{hani} and $C$ is a constant such that for $t=0$ we have $h(0)=\left\|u_{0}\right\|_{\infty}=10^{3}$.
\begin{figure}[H]
	\begin{minipage}[b]{0.40\linewidth}
		\centering
		\includegraphics[width=3in,height=3in]{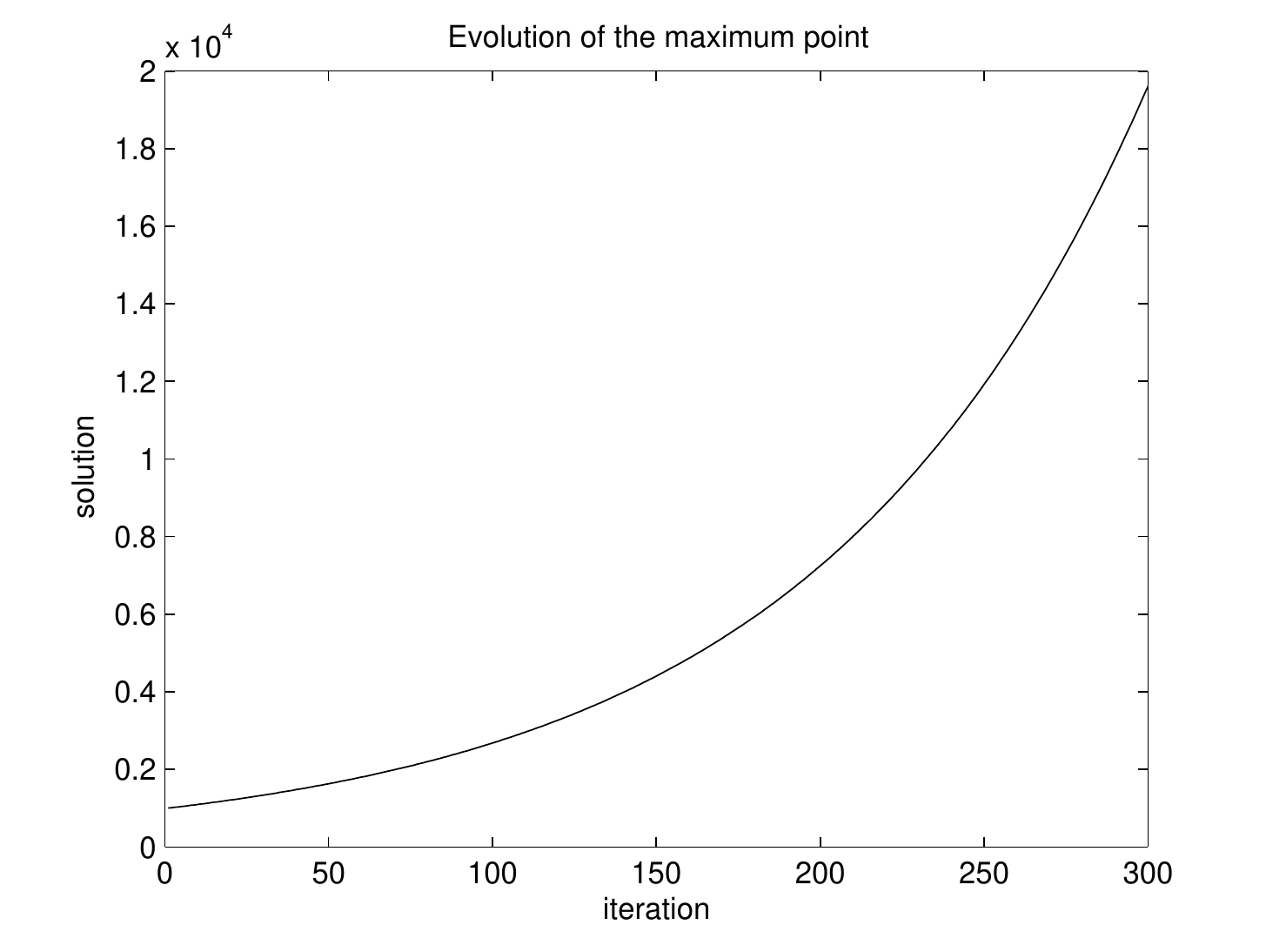}
		\caption{Evolution of the numerical maximum point (blow-up point) $x=0$ for $p=3$ and $q=1.3$.}
		\label{fmax}
	\end{minipage}\hfill
		\begin{minipage}[b]{0.40\linewidth}
			\centering
			\includegraphics[width=3in,height=3in]{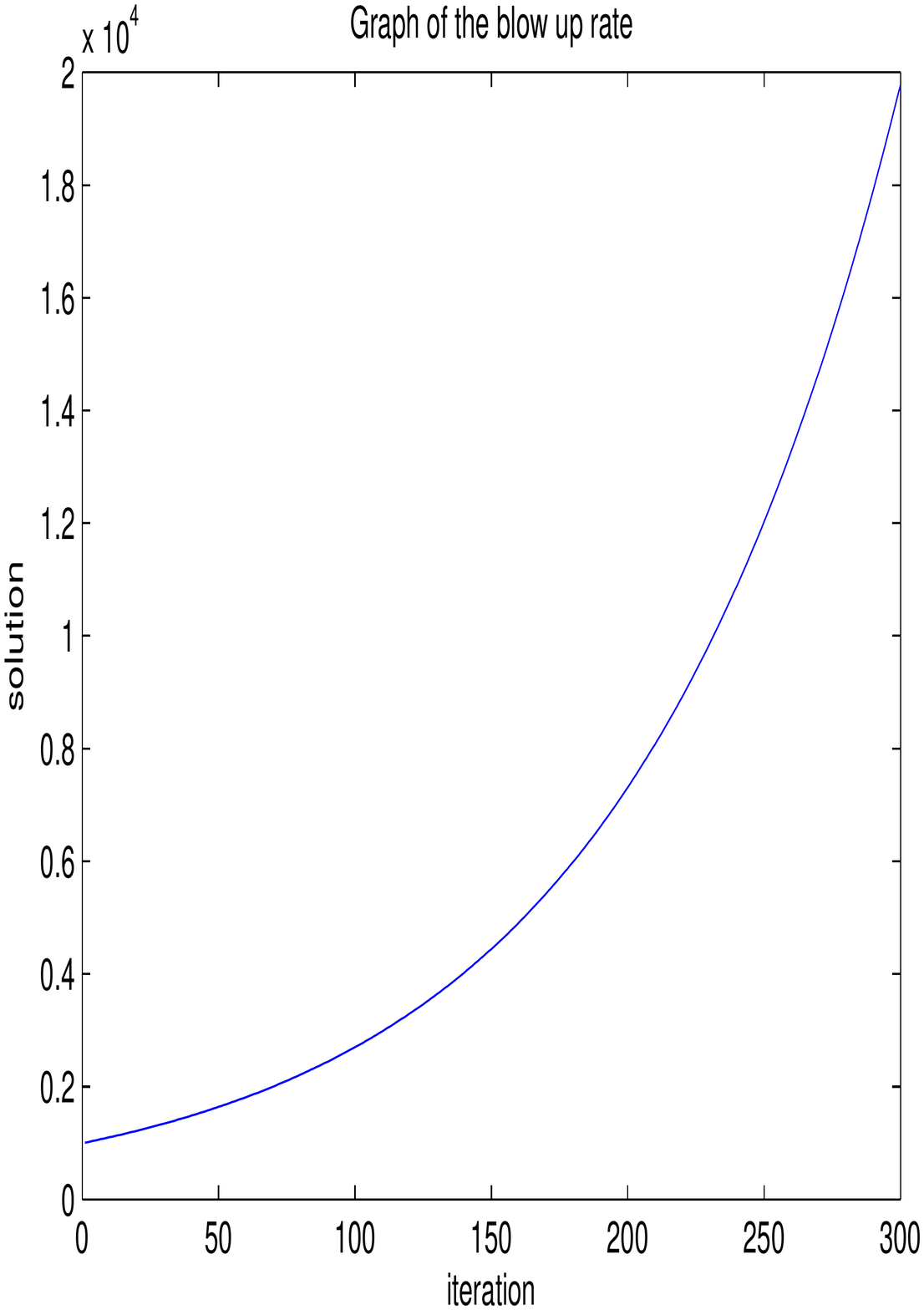}
			\caption{The blow up rate given by $ \left\|u(t)\right\|_{\infty}\approx h(t)$ with $T^{*}_{num}\approx 5.067.10^{-7}$, $C\approx 0.711481$ and $p=3$.}
			\label{ftauxexplosion}
		\end{minipage}
\end{figure}
We show in Figure \ref{small} that the solution decays with a small initial data $u_{0}(x)=\sin(\frac{\pi}{2}(x+1))$, and hence blowing up can not occur. This was proved theoretically in theorem \ref{petitedonnee}.
\begin{figure}[H]
	\centering
	\includegraphics[width=3in,height=3in]{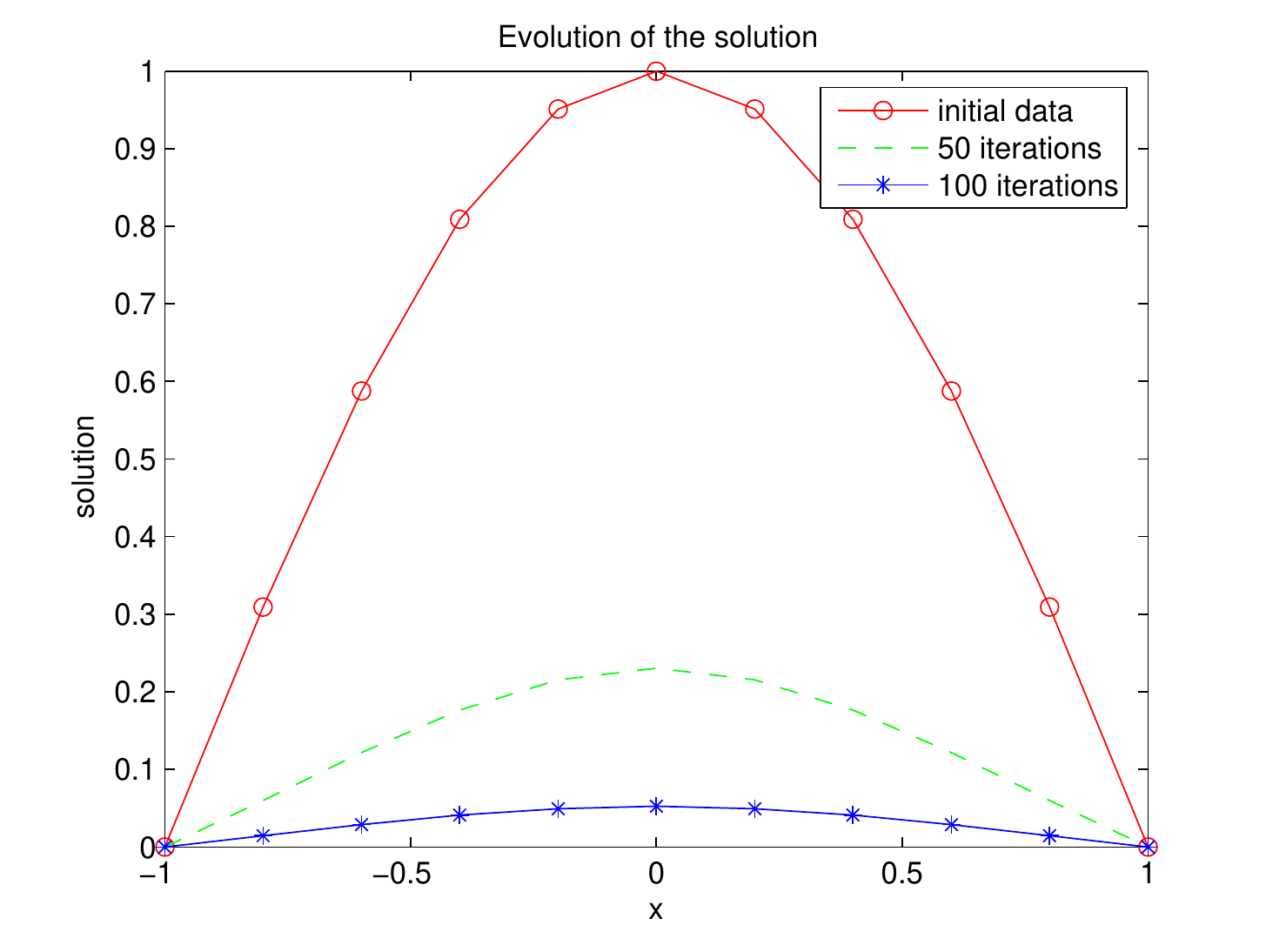}
	\caption{Global solution with a small initial data $u_{0}(x)=\sin(\frac{\pi}{2}(x+1)$.}
	\label{small}
\end{figure}
%%%%%%%%%%%%%%%%%%%%%%%%%%%%%%%%%%%%%%%%%%%%%%%%%%%%%%%%%%%%%%%%%%%%%%%%%%%%%%%%%%%%%%%%
 We can see that the solution without gradient term shown in Figure \ref{2Dfujita} blows up more rapidly than the solution of the Chipot-Weissler equation shown in Figure \ref{2D}, this proves the damping effect of the gradient term.
\begin{figure}[H]
	\begin{minipage}[b]{0.40\linewidth}
		\centering
		\includegraphics[width=3in,height=3in]{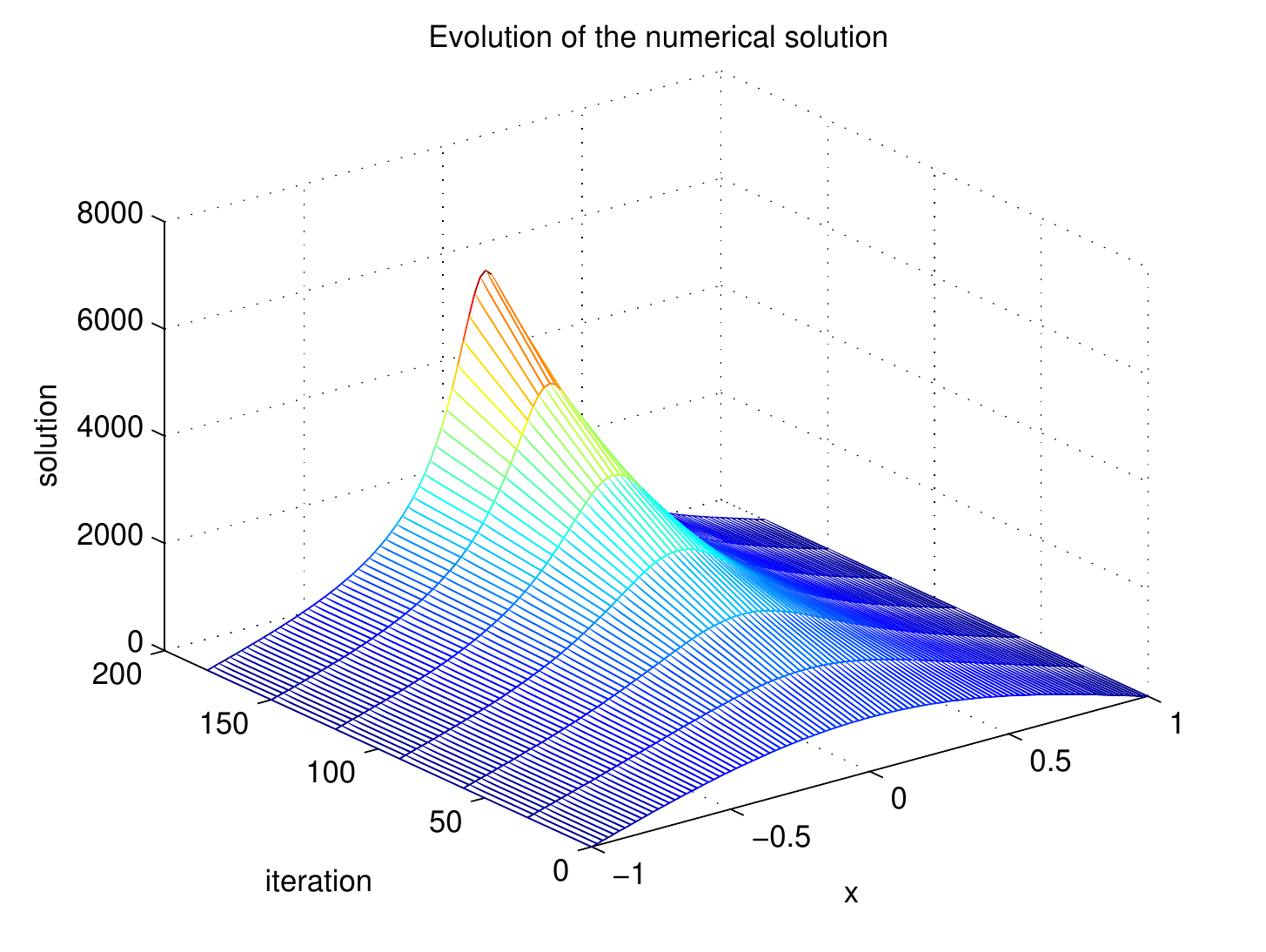}
		\caption{The shape of the numerical blow-up solution with gradient term.}
		\label{2D}
	\end{minipage}\hfill
	\begin{minipage}[b]{0.40\linewidth}
		\centering
		\includegraphics[width=3in,height=3in]{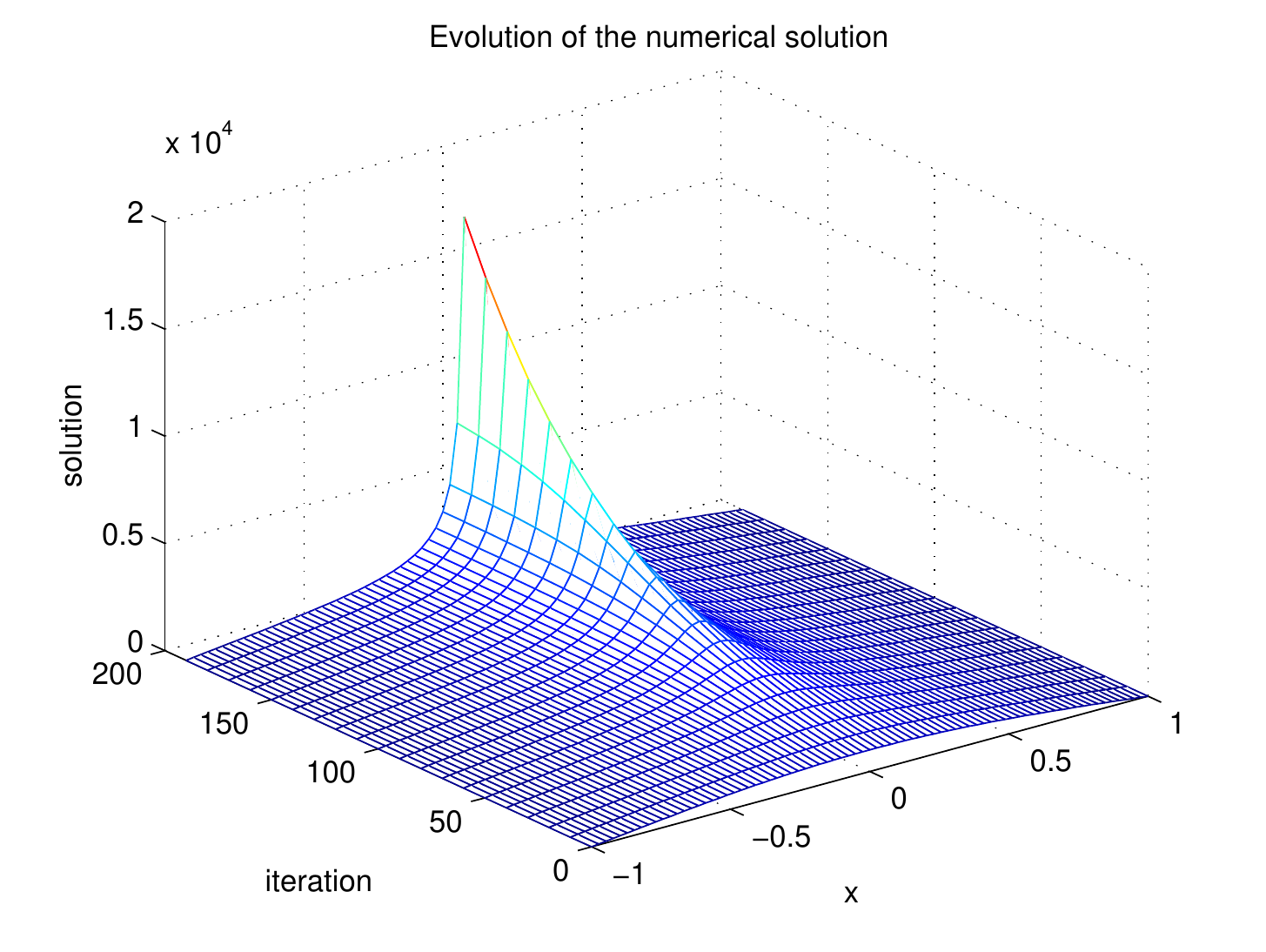}
		\caption{The shape of the numerical blow-up solution without gradient term.}
		\label{2Dfujita}
	\end{minipage}
\end{figure}

%%%%%%%%%%%%%%%%%%%%%%%%%%%%%%%%%%%%%%%%%%%%%%%%%%%%%%%%%%%%%%%%%%%%%%%%%%%%%%%%%%%%%%%
\section{Concluding remarks}
In this paper we have developped a numerical scheme in order to approximate the blow-up solution of the Chipot-Weissler equation. We have showed that we have blow up in $x_{m}=0.$\\
Our goal in another work is to use our scheme to study the competition between the gradient term which fights against blow up and the reaction term which may cause blow up in finite time as in the Fujita equation (without gradient term). In particular, we would like to answer some questions in the future study :
\begin{enumerate}
	\item Can we determine the numerical blow-up set exactly ?
	\item What can we say about the asymptotic behaviours of the numerical solution near the blow-up set?
	\item Can we give an approximation about the blow up time ?
	\item Let us consider the equation $u_{t}=\Delta u+a\left|u\right|^{p}-b\left|\nabla u\right|^{q}.$ What conditions should be satisfied by $a$ and $b$ to reproduce blowing-up phenomena ?
	\item Can we extend our results for $d\geq 2$ ?\\
\end{enumerate}
\begin{center}
	\textbf{Acknowledgment: }
\end{center}
The present paper is an outgrowth of the first author's thesis' under the guidance of the second author to him he is highly acknowledged.

\end{document}